\title{}
\author{}
\date{}
\newtheorem{theorem}{Theorem}[section]
\newtheorem{lemma}[theorem]{Lemma}
\newtheorem{proposition}[theorem]{Proposition}
\newenvironment{proof}[1][Proof]{\begin{trivlist}
\item[\hskip \labelsep {\bfseries #1}]}{\end{trivlist}}
\newenvironment{definition}[1][Definition]{\begin{trivlist}
\item[\hskip \labelsep {\bfseries #1}]}{\end{trivlist}}
\newenvironment{keywords}{\begin{@abssec}{\keywordsname}}{\end{@abssec}}
\newenvironment{@abssec}[1]{%
\if@twocolumn
\section*{#1}%
\else
\vspace{.05in}\footnotesize
\parindent .2in
{\upshape\bfseries #1. }\ignorespaces 
\fi}
{\if@twocolumn\else\par\vspace{.1in}\fi}
\newenvironment{AMS}{\begin{@abssec}{\AMSname}}{\end{@abssec}}
\newcommand{\qed}{\nobreak \ifvmode \relax \else
\ifdim\lastskip<1.5em \hskip-\lastskip
\hskip1.5em plus0em minus0.5em \fi \nobreak
\vrule height0.75em width0.5em depth0.25em\fi}
\newcommand{\CB}{\beta}
\newcommand{\UB}{\mathfrak{U}}
\newcommand\keywordsname{Key words}
\newcommand\AMSname{AMS subject classifications}
\begin{document}
\title{Avoiding extremes using Partial Control.}
\author{Suddhasattwa Das\footnotemark[1], 
\and James A Yorke\footnotemark[2]
}

\footnotetext[1]{Department of Mathematics, University of Maryland, College Park}
\footnotetext[2]{Institute for Physical Science and Technology, University of Maryland, College Park}

\date{\today}
\maketitle

\begin{abstract}
Dynamical systems can be prone to severe fluctuations due to the presence of chaotic dynamics. This paper explains for a toy chaotic economic model how such a system can be regulated by the application of relatively weak control to keep the system confined to a bounded region of the phase space, even in the presence of strong external disturbances. Since the control here is weaker than the disturbance, the system cannot be controlled to a particular trajectory, but under certain circumstances it can be partially controlled to avoid extreme values. Partial control depends on the existence of a certain set called a ``safe sets''. We describe the safe set and how it varies with parameters, sometimes continuously and sometimes discontinuously. 
\end{abstract}

\begin{keywords}Partial control, chaotic systems, safe sets\end{keywords}

\begin{AMS} 93Cxx\end{AMS}

\section {Introduction}

Partial control (\cite{PartialControl_1}, \cite{PartialControl_2}, \cite{PredPrey}) concerns a situation in which there is a map $f:\mathbb{R}^d\rightarrow \mathbb{R}^d$ and a compact region $Q \subset \mathbb{R}^d$ in which the dynamics of $x_{n+1} = f(x_n)$ are chaotic and for almost every initial point $x_0$, the trajectory $f^n(x_0)$ eventually leaves $Q$.

A bounded disturbance $\xi_n$ and a bounded feedback control $u_n$ are added to $f$, and the goal of the controller is to keep the trajectory of $x_{n+1} = f(x_n) + \xi_n + u_n$ confined to $Q$. Here $u_n$ is chosen with knowledge of $f(x_n) + \xi_n$.
We view $\xi_n$ as the cumulative result of ongoing disturbances over the time interval $(n, n+1]$, the time since the last control input, and these ongoing disturbances are observed by the controller as they occur so that the controller is ready to respond at the end of that interval with the response $u_n$.
The control goal is easy to achieve if the control bound $\UB$ is larger than the disturbance bound $\CB$ but we investigate situations in which $\UB<\CB$. In our case it is impossible to select an unstable trajectory and choose the control so that that trajectory is followed.
The strategies of choosing $u_n$ depend on the bounds $\UB$ and $\CB$ and this paper investigates how the strategy depends on these bounds in the case where f is a one-dimensional piecewise expanding map.

The subject of controlling chaotic systems has been dealt with in several papers in the past so it is important to clarify how partial control is different. In \cite{Controlling_chaos}, the authors demonstrate that chaotic systems can be controlled by making small time-dependent perturbations to a chaotic system so as to steer the state to a nearby periodic trajectory. Their method is applicable to systems whose dynamics is not known. In contrast, the method of partial control requires an explicit knowledge of the map f and aims at preventing only escape from $Q$ rather than targeting some reference trajectory. 
We begin with a specific discrete time example to illustrate the nature of this general control-problem \cite{PartialControl_1}, \cite{PartialControl_2}. Then we investigate how the problem changes as the parameters change.

In \cite{PredPrey}, a method of partial control was proposed to sustain a 3-species predator-prey system with chaotic dynamics. The amount of control needed to avoid the extinction of a species was demonstrated to be smaller than that needed by classical control methods, even in the case where $\xi_n$ is chosen purposefully to drive the trajectory from $Q$. In \cite{ControlFreq}, the authors demonstrated that the use of partial control leads allows one to apply the control after larger time intervals. They investigated the minimum control frequency needed for partial control in a 1D tent map and the Henon map.

The requirement that the trajectory must stay in a specified region has been discussed in the control literature as \textbf{set invariance}. 
The central question that Bertsekas and others (\cite{Reachability1},\cite{Reachability2},\cite{Reachability3}) investigate is ``under what conditions can the state of the uncertain system be forced to stay in a specified region of the state space for all times by using feedback control'' (\cite{Reachability1}) . 
This is also our question, but in a discrete-time setting. They require the control vector to be of lower dimension than that of the state vector since in their framework it would be trivial to control the trajectory if the dimensions were equal. We however set the dimensions equal and in this paper both are a single variable, that is, one dimensional. The problem is not trivial here because the bound on the control is smaller than the bound on the disturbance.

Our main example uses a one-dimensional tent map and for motivational purposes, we think of it as an economic model. Any one-dimensional model of the growth rate $x$ of an economy and its dynamics is simplistic.
It is likely that any model of any dimension could be called too simplistic -- because world economies are truly complicated. 
However the model is complicated enough to introduce complicated phenomena including dangers of economic crashes as the world has recently seen. 
We have also seen that the world economy appears to be unstable, amplifying the effects of some disturbances, and our model incorporates that sensitivity. 
This paper is not aimed at saving the world economy but rather at introducing into the control literature a control strategy that we believe the control community might find valuable. 
One can argue that the controls available to governments are small compared with the disturbances
and our model addresses that problem of having controls that are weaker than the disturbances. 
The strategy is to respond to the disturbance by driving that trajectory to a maximal ``safe'' set. This set is not invariant, and not even connected, despite the one dimensionality of the problem.

\section{A toy example}
The 1-dimensional map we will use as an example is shown in Fig.(\ref{fig:Toy_economic_map}). It is called an asymmetric tent map and is described below.
\begin{equation}
\label{eqn:tent_map}
f(x)=\begin{cases}
1.3x, & \mbox{for } 0\leq x\leq 0.7\\
0.91-3(x-0.7), & \mbox{for } 0.7\leq x\leq 1
\end{cases}
\end{equation}
Tent maps can be used to model a variety of physical, biological, and engineering applications, but here we restrict attention to an economic growth model. Here $x$ is scaled so that $x=0$ corresponds to the zero growth rate (during a severe depression in the economy) and $x=1$ corresponds to some unreachable growth rate. The growth rate in year $n$ is $x_n$. 

The dynamics, is (without external disturbances and control), 
\begin{equation}\label{eqn:basic_map}
x_{n+1}=f(x_n)
\end{equation}
\begin{figure}
\centering
\includegraphics[scale=0.08]{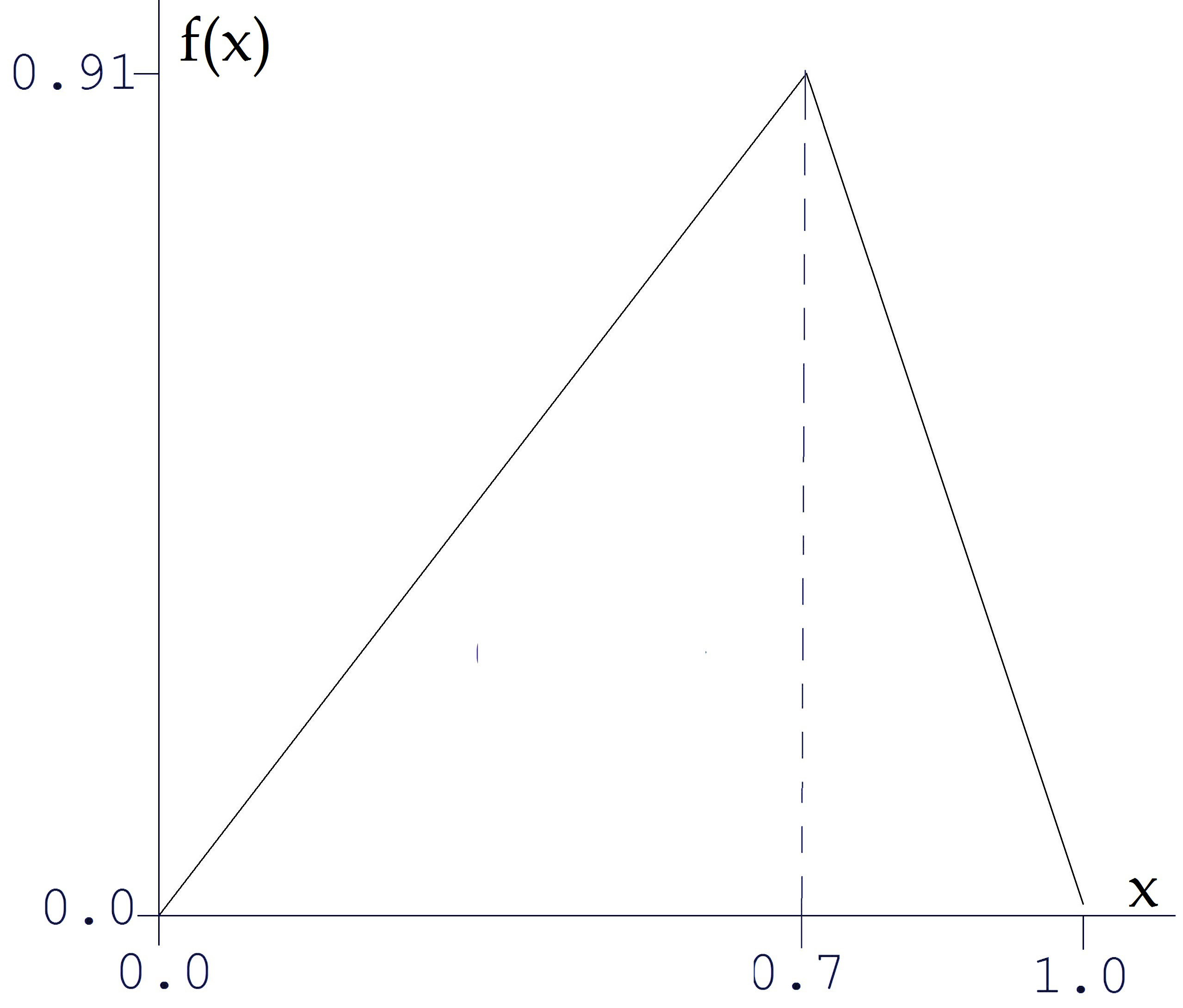}
\caption{\textbf{Asymmetric tent map.} The piecewise linear and expanding map $f$ from Eqn \ref{eqn:tent_map} has slopes $1.3$ and $-3$}
\label{fig:Toy_economic_map}
\end{figure}
Sometimes, allowing an economy to grow too fast can lead to a crash in the economy. That feature is seen in Fig. (\ref{fig:Iterates}), values of $x_n$ near $0.7$ are followed by $x_{n+1}$ near $0.91$, in turn leading to a crash with $x_{n+2}$ near $0.28$. Subsequent recovery is slow. Without any control, the quantity x has repeated crashes. 

\begin{figure}
\centering
\includegraphics[scale=0.14] {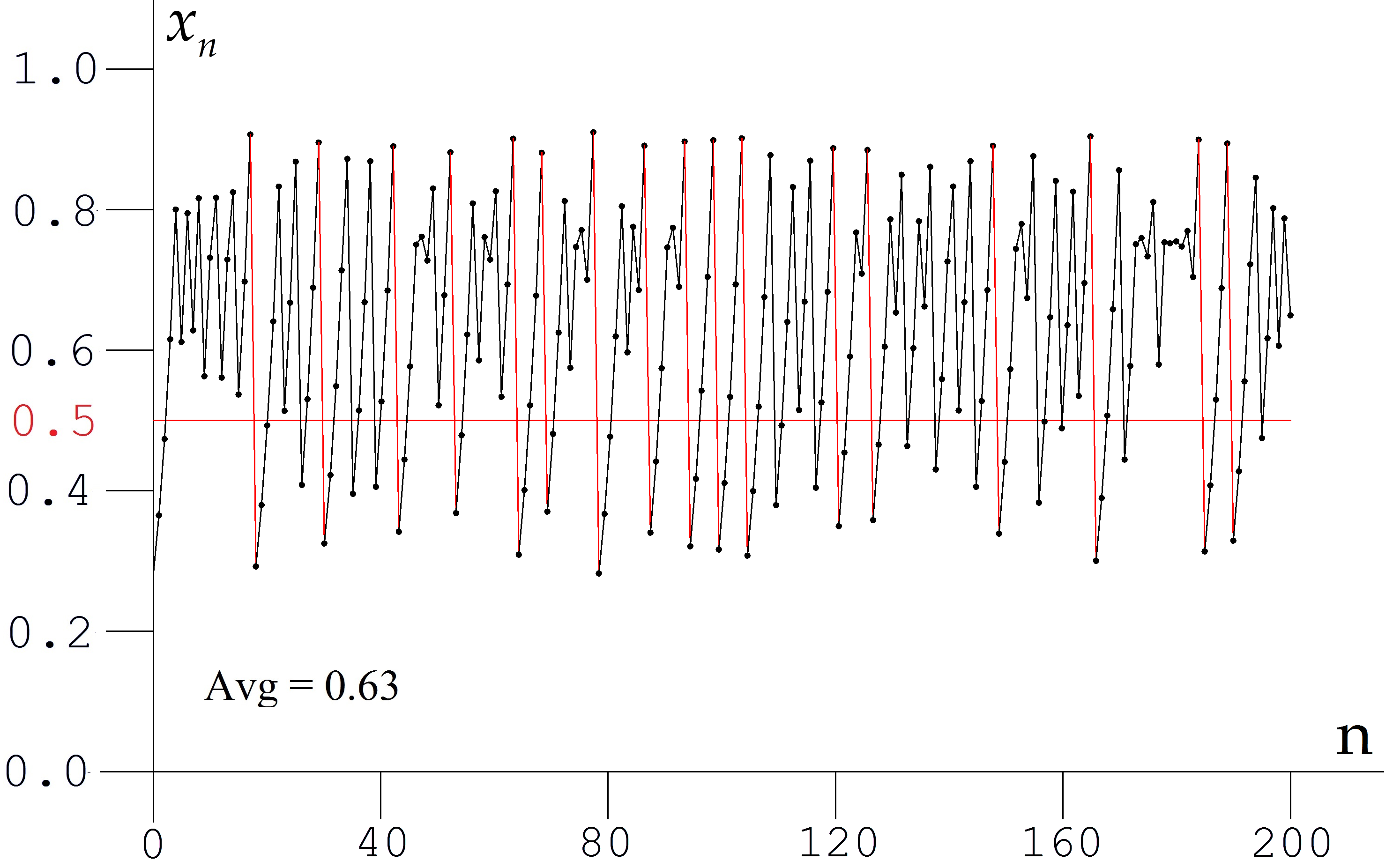}
\caption{\textbf{A chaotic trajectory.} The first 200 iterates of the asymmetric tent map (Eqn. \ref{eqn:tent_map}) are shown. Notice that whenever $x_n$ is near $0.7$, $x_{n+1}$ is below the red line at $0.5$ which denotes our threshold for an economic ``crash''. These crashes have been indicated as red lines. 
}
\label{fig:Iterates}
\end{figure}

\textbf{Perturbed map.} In the presence of strong perturbations, the trajectory can fluctuate even more wildly. We add an external perturbation $\xi_n \in \mathbb{R}$, hence :
\begin{equation}\label{Eqn:perturb}
x_{n+1}=f(x_n)+\xi_n, \mbox{ where } |\xi_n|\leq \CB
\end{equation}
Any perturbation satisfying $|\xi_n|\leq \CB$ will be called an \textbf{admissible perturbation}, where $\CB>0$ is a fixed bound. 

A perturbed trajectory with $\CB=0.05$ plotted in Fig.(\ref{fig:Disturbed_iterates}) shows how disturbances can lead to crashing and prolonged depression in the economy. The average of $x$ dropped from $0.65$ in Fig. (\ref{fig:Iterates}) to $0.59$ in Fig. (\ref{fig:Disturbed_iterates}), because of disturbances. We shall somewhat arbitrarily say that the event of the growth rate falling below $0.5$ will be called a \textbf{\emph{crash}}. Thus we have a target region $Q = [0.5, 1]$ to which we want to confine the trajectory $(x_n)$. It is a compact (bounded, closed) set. The general problem in $\mathbb{R}^n$ of keeping a chaotic trajectory in a closed, bound set $Q$ in the presence of disturbances stronger than control is described in [1], [2]. `

\begin{figure}
\centering
\includegraphics[scale=0.14] {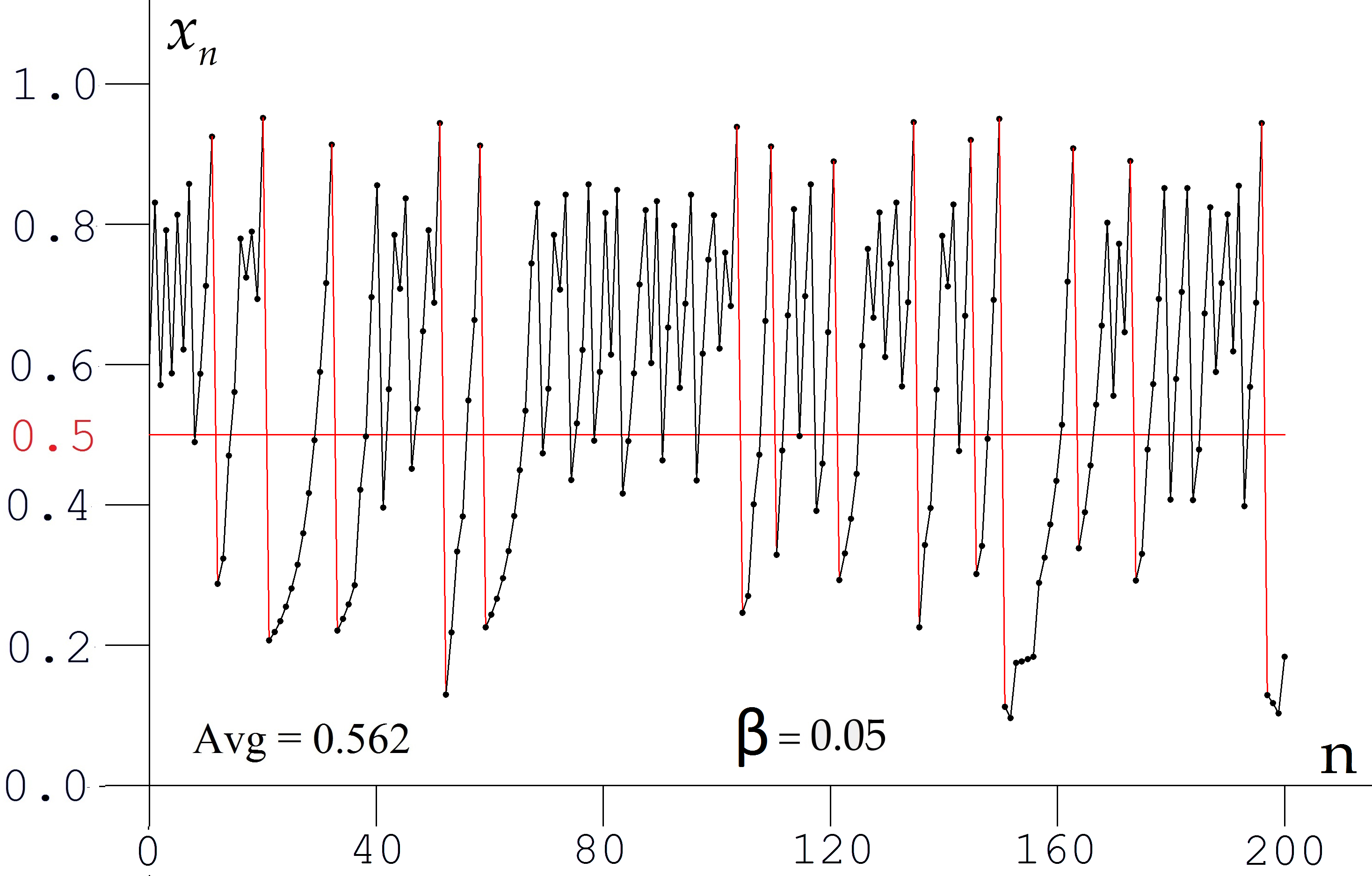}
\caption{\textbf{Perturbed iterates.} The first 200 iterates of the perturbed trajectory (Eqns. \ref{Eqn:perturb}, \ref{eqn:tent_map}) are shown, with the perturbation bound $\CB=0.05$. The average value of $x$ is $0.59$. Each $\xi_n$ is chosen to be $\pm\CB$ so as to exacerbate the crashes.}
\label{fig:Disturbed_iterates}
\end{figure}

\textbf{The perturbed and controlled map.} Knowing the perturbed value $f(x_n)+\xi_n$, we choose a control $u_n$ satisfying $|u_n|\leq \UB$ where $\UB>0$ is a constant. Such a control input is called an \textbf{admissible control}. The primary goal of the control is to choose admissible $u_n$ , given admissible $\xi_n$, so that $x_{n+1}= f(x_n)+\xi_n+u_n \in [0.5,1]$. The resulting dynamics is :
\begin{equation}
\label{eqn:control}
x_{n+1}=f(x_n)+\xi_n+u_n, \mbox{ where }|\xi_n|\leq \CB, \mbox{ and } u_n\leq \UB
\end{equation}
Each $u_n$ is chosen with knowledge of the perturbed value $f(x_n)+\xi_n$, subject to the constraint $|u_n|\leq \UB$ where $\UB>0$ is a constant. Such a control input is called an \textbf{admissible control}. The primary goal is to create a control strategy so that $(x_{n+1})$ always remains in $[0.5,1]$. 

Systems with both state and control constraints have been dealt with in \cite{GutCwi1}, \cite{GutCwi2} and \cite{GutCwi3} in a spirit similar to ours, but for conventional discrete time control systems. The dependence of their control strategy on the state and not the perturbation meant that their methods are not applicable to ours. The application of control \emph{after} a perturbation is also not a new concept. For example, event-based control strategies (see (\cite{EventCtrl}) apply control only when triggered by an event, an event being a subset of the phase space. Event based control strategies are effective in some discrete dynamical systems which are not being monitored in continuous time. In \cite{MinAttent}, the authors use this strategy for designing \emph{minimum attention} control systems.

\begin{figure}
\centering
\includegraphics[scale=0.14] {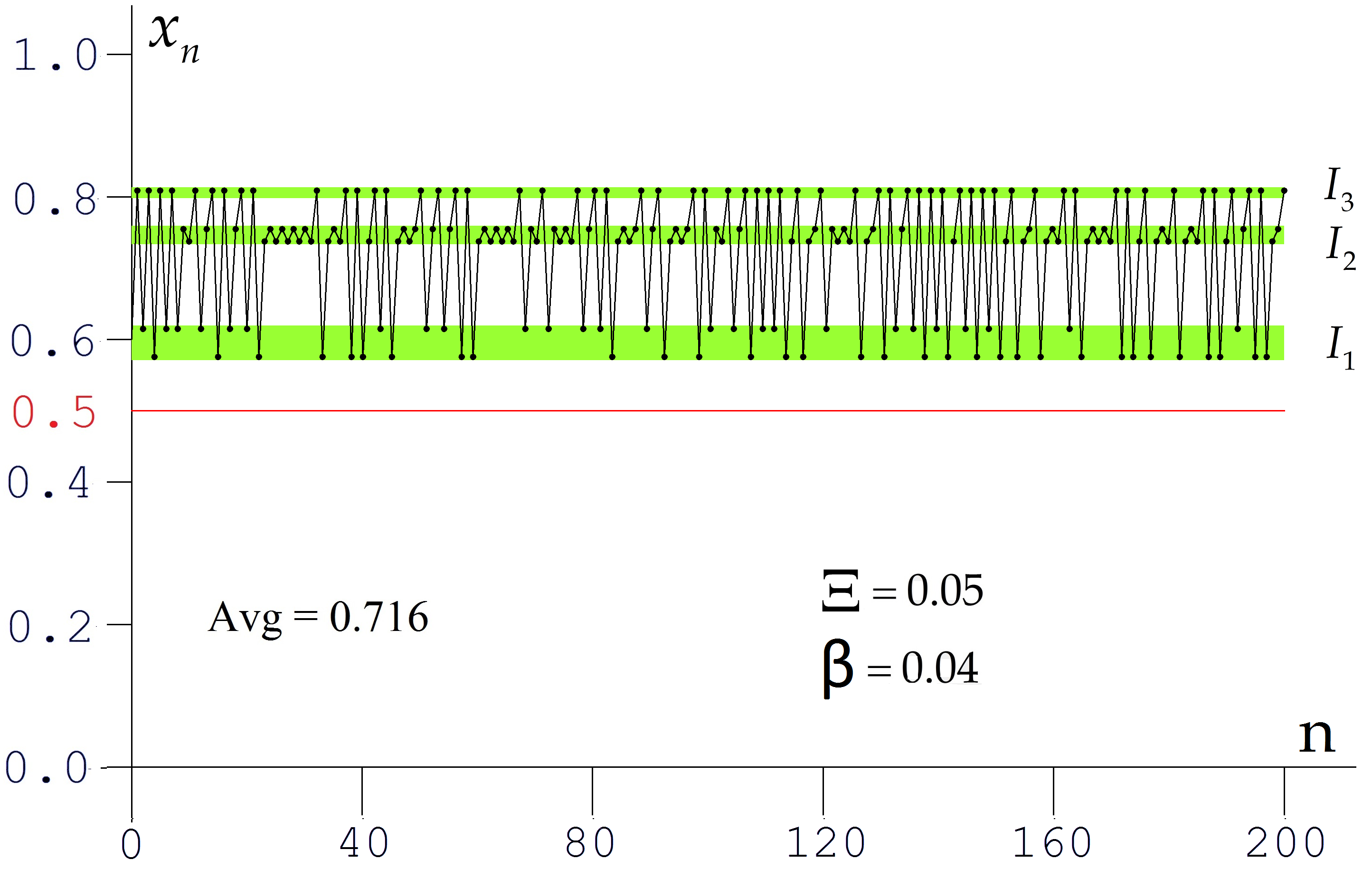}
\caption{\textbf{Partially controlled trajectory.} The first 200 iterates of a partially controlled trajectory (Eqns. \ref{eqn:tent_map}, \ref{eqn:control}) are shown, with the perturbation bound $\CB=0.05$ and the control bound $\UB=0.04<\CB$. The average value of $x$ now is $0.72$. By applying an admissible control bounded by $\UB$ to the same system, the trajectory can be kept above $0.57$ for all times by choosing each $u_n$ so that $x_{n+1}$ is in the green colored set $S=I_1\cup I_2\cup I_3$. Fig. \ref{fig:Safeness_demo} explains why this strategy can be employed to keep $x_n$ in $S$ for all $n$.}
\label{fig:Controlled_iterates}
\end{figure}

\textbf{The crash avoidance strategy.} In Figure \ref{fig:Controlled_iterates}, a strategy for choosing admissible $u_n$ is used that guarantees that the trajectory can be kept in $Q$ for some initial points in $Q$. Simply put, there is a compact set $S$ for which if $x$ is in $S$, then no matter how the admissible $\xi$ is chosen, there is an admissible $u$ (depending on $x$ and $\xi$) such that $f(x) + \xi + u \in S$. In Fig. \ref{fig:Controlled_iterates}, the set $S$ is the union of three intervals (shown in green).

We call such a set $S$ a \textbf{safe set} if it has the following property : for each $x\in S$ and admissible $\xi$, there is a $u$ such that $f(x) + \xi + u \in S$. The choice of $u$ depends on $x$ and $\xi$.

There is a largest safe set, the \textbf{maximal safe set}, and it has the following properties (S1) and (S2), with (S2) actually being stronger than (S1).

(S1) \ If $x\in Q-S$, then there is an admissible $\xi$ such that there is no admissible $u$ for which that $f(x) + \xi + u \in S$.

(S2) \ If $x_0\in Q-S$, then there is a strategy for choosing admissible $\xi_n$ as a function of $x_n$ such that no matter how $u_n$ is chosen, the trajectory can be eventually driven out of $Q$. That is, for some $N$, $x_N \notin Q$.
\\\\
In Figure \ref{fig:Controlled_iterates}, the safe set is the union of the three intervals $I_1\approx [0.57, 0.61]$, $I_2\approx[0.74, 0.75]$ and $I_3\approx[0.80, 0.81]$. $S=I_1\cup I_2\cup I_3$. As mentioned earlier, the strategy is to choose each $u_n$ so that $x_{n+1}$ lies in the safe set. The surprising consequence of being a safe set is that if $x_n\in S$, no matter what admissible $\xi_n$ occur, an admissible $u_n$ can be chosen so that $x_{n+1}= f(x_n)+\xi_n+u_n$ is again in the safe set. This set $S$ is the largest set in $[0.5,1]$ with this property. The sequence of indices $\{i_n\}$ such that $x_n\in I_{i_n}$ is primarily determined by the disturbance $\xi_n$. If $f(x_n)+\xi_n$ is within distance $0.04$ from only one of the three intervals, then $u_n$ has to be chosen so that $x_{n+1}$ is in that same interval.

\begin{figure}
\centering
\includegraphics[scale=0.3] {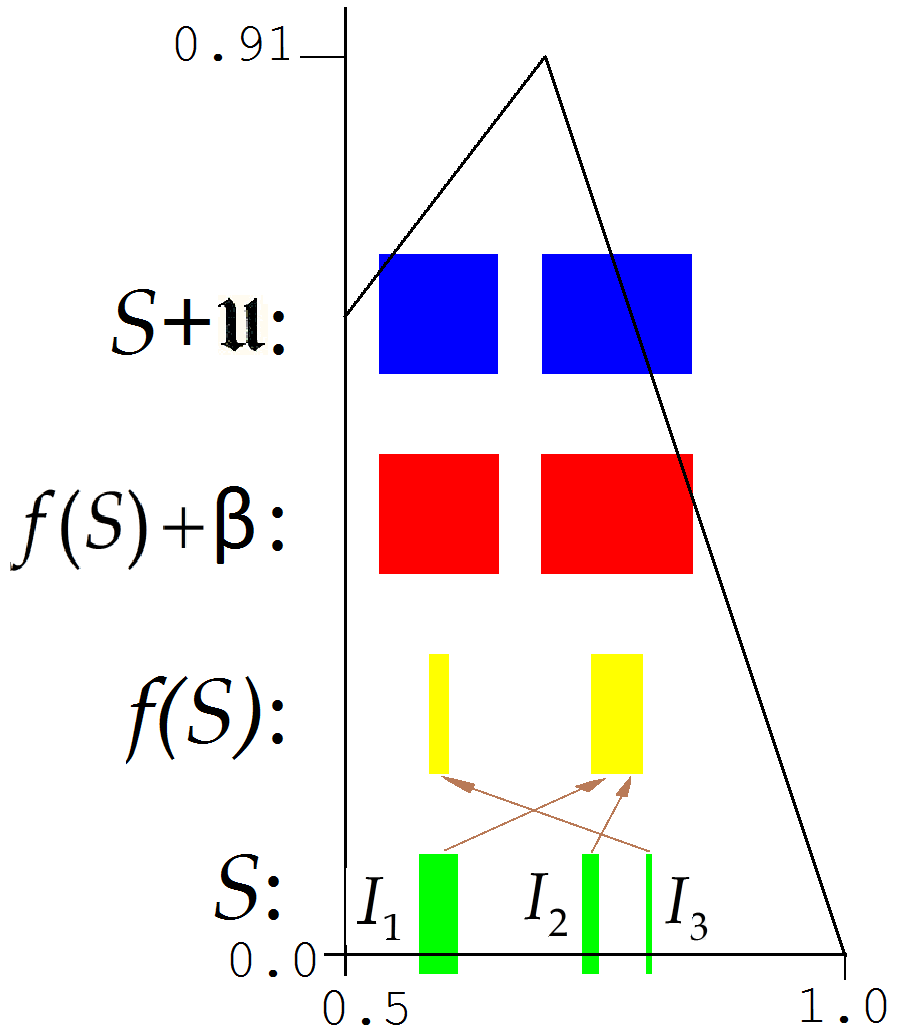}
\caption{\textbf{The safe set and its dynamics.} The colored blocks represent intervals in $[0.5,1]$. The safe set $S$ for Eqns. \ref{eqn:tent_map}, \ref{eqn:control}, where $\UB=0.04$ and $\CB=0.05$ has components $I_1\approx[0.5748, 0.6142]$, $I_2\approx[0.7372, 0.7543]$, $I_3\approx[0.8019, 0.8084]$. Their images under $f$ are $f(I_1)=f(I_2)\approx[0.7472, 0.7984]$, $f(I_3)\approx[0.5848, 0.6042]$, as indicated in the figure by arrows. Let $J_i=f(I_i)+\CB$ denote the interval $f(I_i)$ thickened by $\CB$. Therefore, for all $x_n\in S$ and all admissible $\xi_n$, $f(x_n)+\xi_n$ must be in either $J_1=J_2\approx[0.6972, 0.8484]$ or $J_3\approx[0.5348, 0.6542]$. All the points in this set are within distance $\UB(=0.04)$ of $S$. [Note that $S+\UB$ $\approx$ $[0.5348, 0.6542]\cup[0.6972, 0.8484]$. Hence, there exists admissible $u_n$ so that $f(x_n)+u_n+\xi_n$ is back in the safe
set.}
\label{fig:Safeness_demo}
\end{figure}

\textbf{Unsafe points.} Points not in the maximum safe set are points from where it is possible for admissible perturbations to drive the trajectory below $0.5$ in a finite number of steps, no matter what admissible controls are chosen. 
In Fig. \ref{fig:Iterates}, it is noted in effect that the point $x=0.7$ cannot be in the safe set and hence is an unsafe point. Conversely, the safe set $S$ denotes the set of growth rates $x$ of the economy from which it is always possible to avoid crashes. $S$ depends on the parameters $\UB$ and $\CB$ as well as on the threshold set for crashes, $0.5$. This paper investigates how the safe set changes as $u$ and $\delta$ are changed. In \cite{ContTimeInvThm}, Nagumo provided a necessary and sufficient condition for a set to be a safe-set for continuous time control systems in terms a certain differential property of the boundary of the safe set, called \emph{tangent cone}. In our partial control problem, we also look at geometric properties of the boundary of the safe set in Theorem \ref{thm:bifur_1} and provide necessary and sufficient conditions for the safe set to be a continuous or discontinuous function of the parameters.

\textbf{Maximum safe set.} The safe set may not exist for some values of the bounds $\UB$ and $\CB$. However, if it exists, there is a maximum (largest) safe set in $Q=[0.5,1]$. It is the collection of all safe starting points, and is necessarily a compact set. 

Maximum safe sets represent the subset of the phase space in which the controlled trajectory will ultimately reside. In \cite{PartialControl_2}, the authors defined the \emph{asymptotic safe set} to be that subset of the safe set which is invariant under the control law. In classical control theory, \textbf{controlled invariant / viable set} is defined as the set to which it is possible to return after the application of control. This concept is the analog of safe sets in classical control theory where control is stronger than perturbation. In \cite{SetInvSurvey}, it is discussed how the control law is highly dependent on the geometry of this set. There is an analog of asymptotic safe sets in the field of system engineering, called the \emph{domain of attraction} or \emph{stability domain}, whose theoretic/computational determination is of fundamental importance in these fields (see for example, \cite{AsympStab}). Set-invariance also finds applications in the qualitative analysis of biological systems, as in \cite{InvSetBio} and \cite{PredPrey}. In \cite{Kolmanovski}, the largest invariant set under state and control constraints is used to improve a control-compensator performance.

\section{Safe set as a function of $\UB$ and $\CB$}

Fig(\ref{fig:Safeset_eg}) shows the safe sets at three values of $\UB$ and $\CB$ for Eqns. \ref{eqn:tent_map}, \ref{eqn:control}. The safe set is composed of intervals and depends on the parameters $\UB$ and $\CB$. As seen from the figures, the number of components can vary, as can the measure (sum of sizes of the components).

\begin{figure}
\centering
\begin{subfigure}{0.3\textwidth}
\includegraphics[scale=0.2]{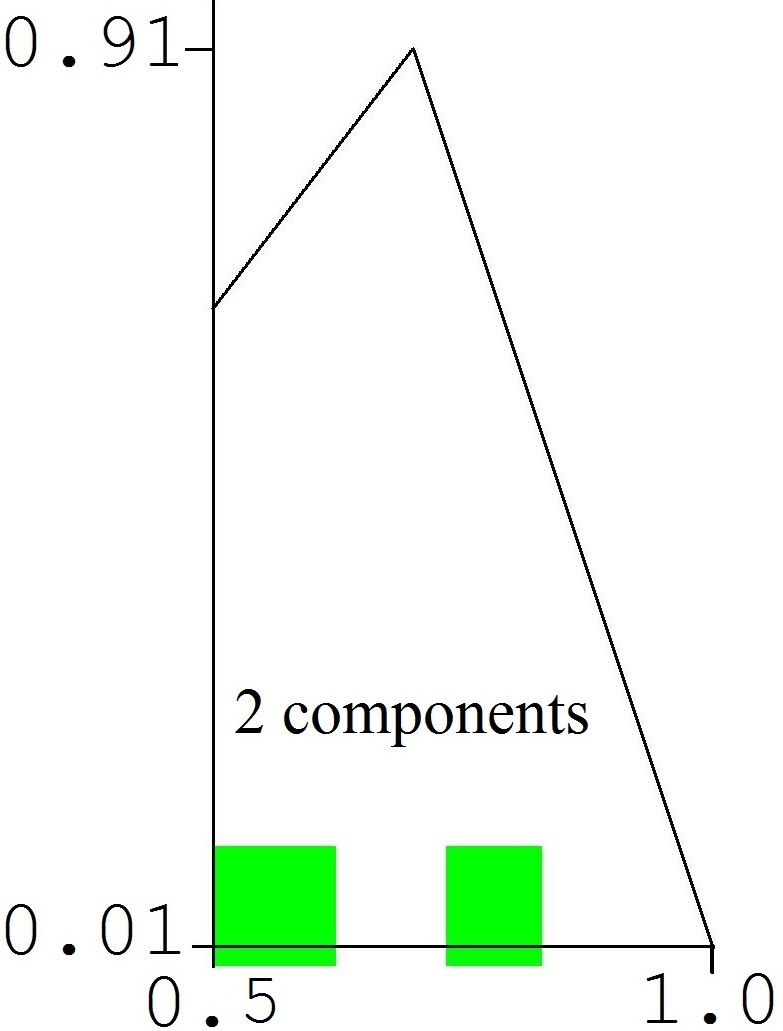}
\caption{$\UB=0.08$, $\CB=0.1$}
\label{fig:Safeset_eg_1}
\end{subfigure}
\begin{subfigure}{0.3\textwidth}
\includegraphics[scale=0.2]{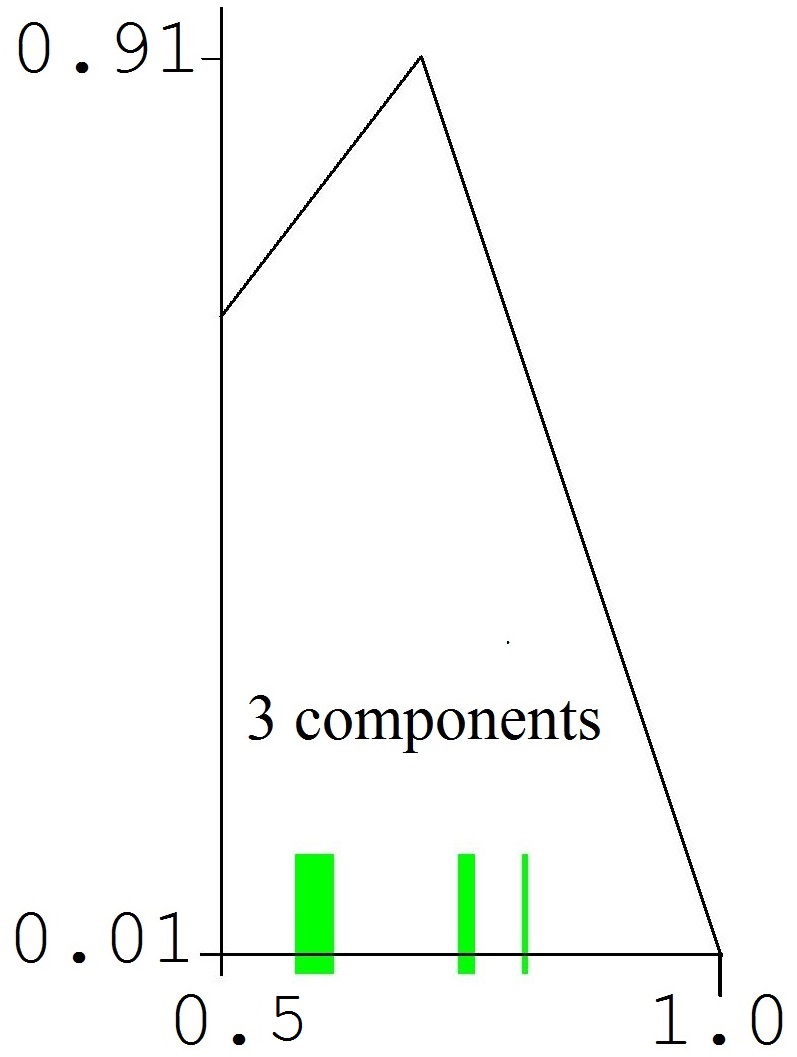}
\caption{$\UB=0.04$, $\CB=0.05$}
\label{fig:Safeset_eg_2}
\end{subfigure}
\begin{subfigure}{0.3\textwidth}
\includegraphics[scale=0.2]{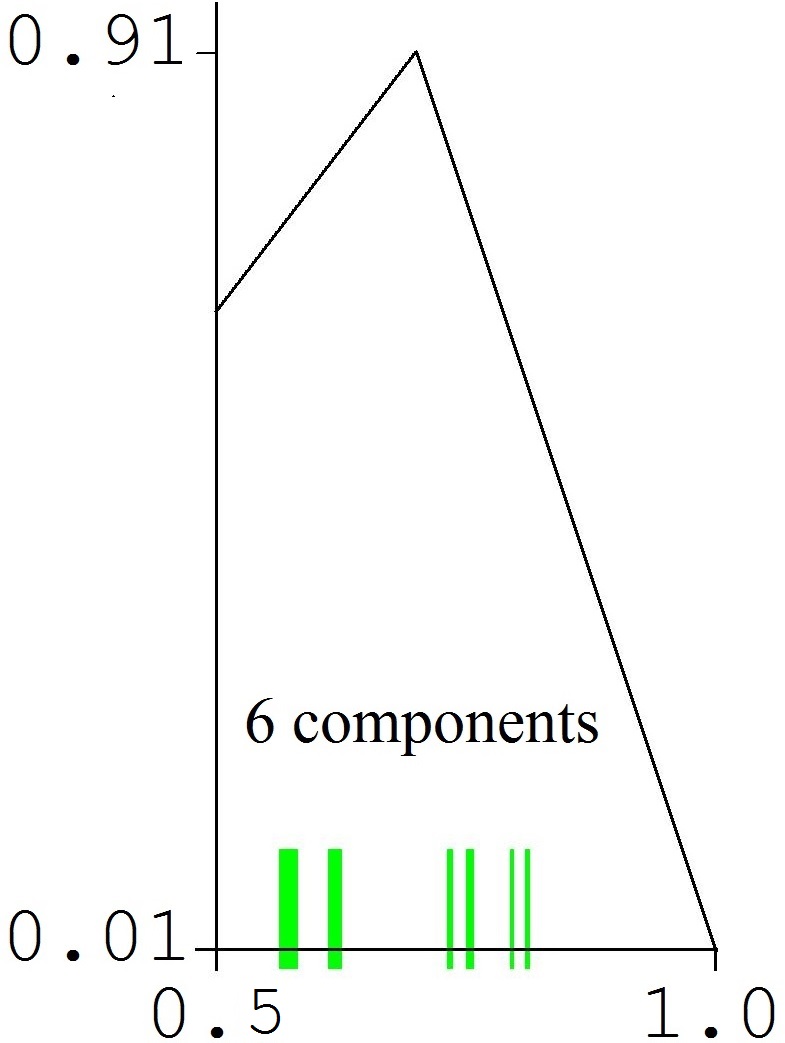}
\caption{$\UB=0.008$, $\CB=0.01$}
\label{fig:Safeset_eg_3}
\end{subfigure}
\caption{\textbf{Examples of safe sets.} The safe sets for the Eqns. \ref{eqn:tent_map}, \ref{eqn:control} are shown, for three choices of $\UB$ and $\CB$. The safe sets $S$ are marked in green, with the number of components of $S$ labeled.}
\label{fig:Safeset_eg}
\end{figure}

Fig. \ref{fig:Measure} shows a plot of the measure of the safe set as functions of $\UB$ and $\CB$, with $0<\UB<\CB$. The number of components has also been marked for some regions. The measure (sum of lengths of components) has been indicated by color.

\begin{figure}
\centering
\includegraphics[scale=0.1] {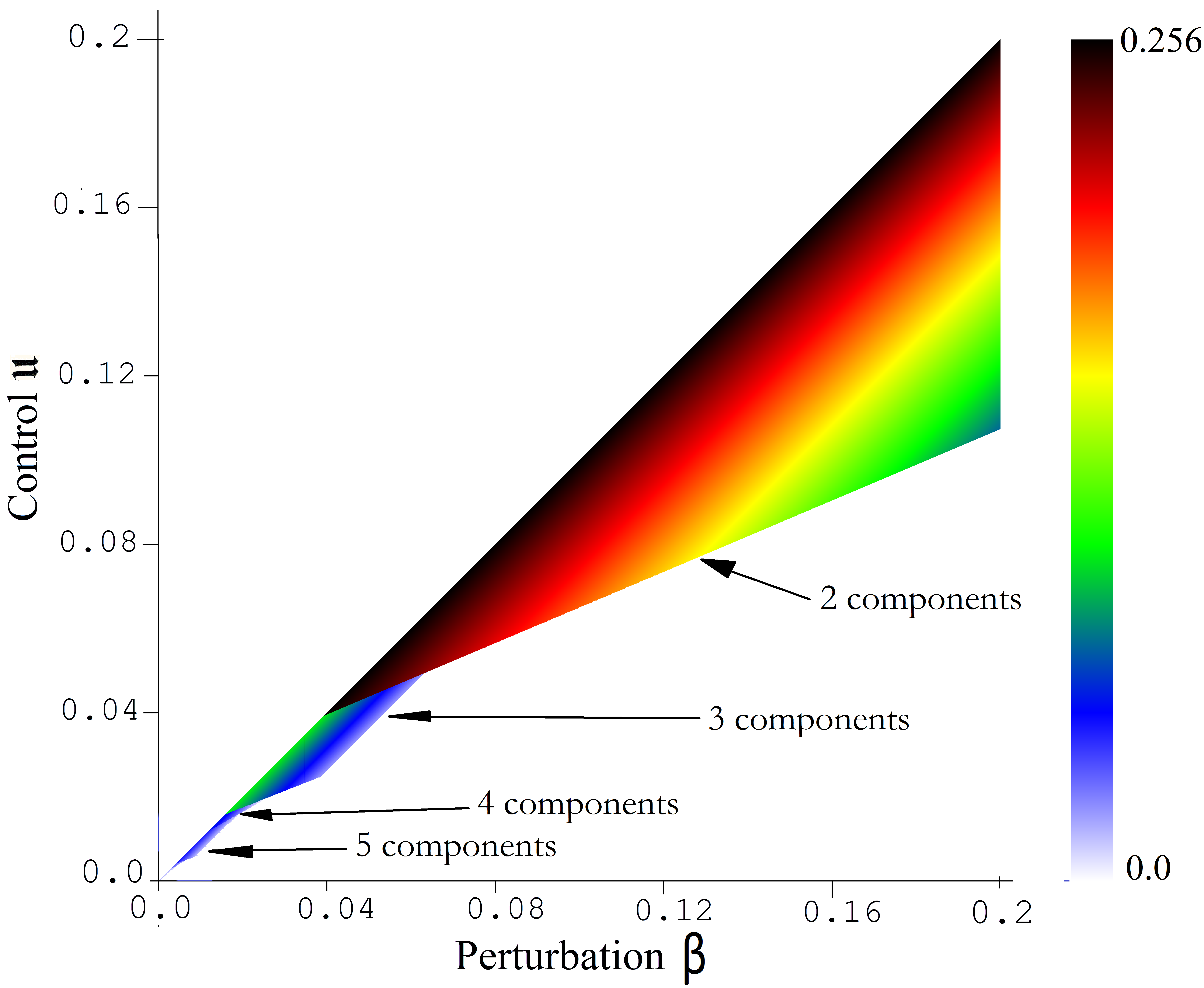}
\caption{\textbf{Size of the safe set} The colored region in this plot denotes the values of $(\UB,\CB)$, ($0 < \UB < \CB ≤ 0.2$), at which there is a safe set for the asymmetric tent map Eqns. \ref {eqn:tent_map}, \ref{eqn:control}. There are no points on this plot above the diagonal $\UB=\CB$ since the control bound $u$ is always less than the disturbance bound $\CB$. The blank space at the bottom of the figure denotes values of $(\UB,\CB)$ for which there is no safe set. The legend on the right shows how the colors correspond to measure (i.e., sum of lengths of components) of the safe set as a function of $\UB$ and $\CB$. The number of components in the safe set has also been marked for certain portions of the plot.}
\label{fig:Measure}
\end{figure}

From the plot, some of the observations that can be made are :
(i) For every value of the perturbation $\CB$, there is a minimum value of the control $\UB$, denoted as $u_{min}(\CB)$ for which $S$ is nonempty.
(ii) Some portions of the graph of $u_{min}$ seem to be straight lines with slope $1$.
(iii) There are certain boundaries in the plot across which the number of components increase on decreasing $\UB$ or increasing $\CB$.

Fig. \ref{fig:n_profile} shows how the maximal safe set varies with $\UB$ for fixed $\CB=0.05$. The safe set has $2$ components for $\UB\geq \UB^*\approx 0.45$. At $\UB=\UB^*$ one of the components splits into two smaller components. At $\UB\approx 0.0357$, one of the components shrinks to a single point and below that the entire safe set vanishes.

\begin{figure}
\centering
\includegraphics[scale=0.15] {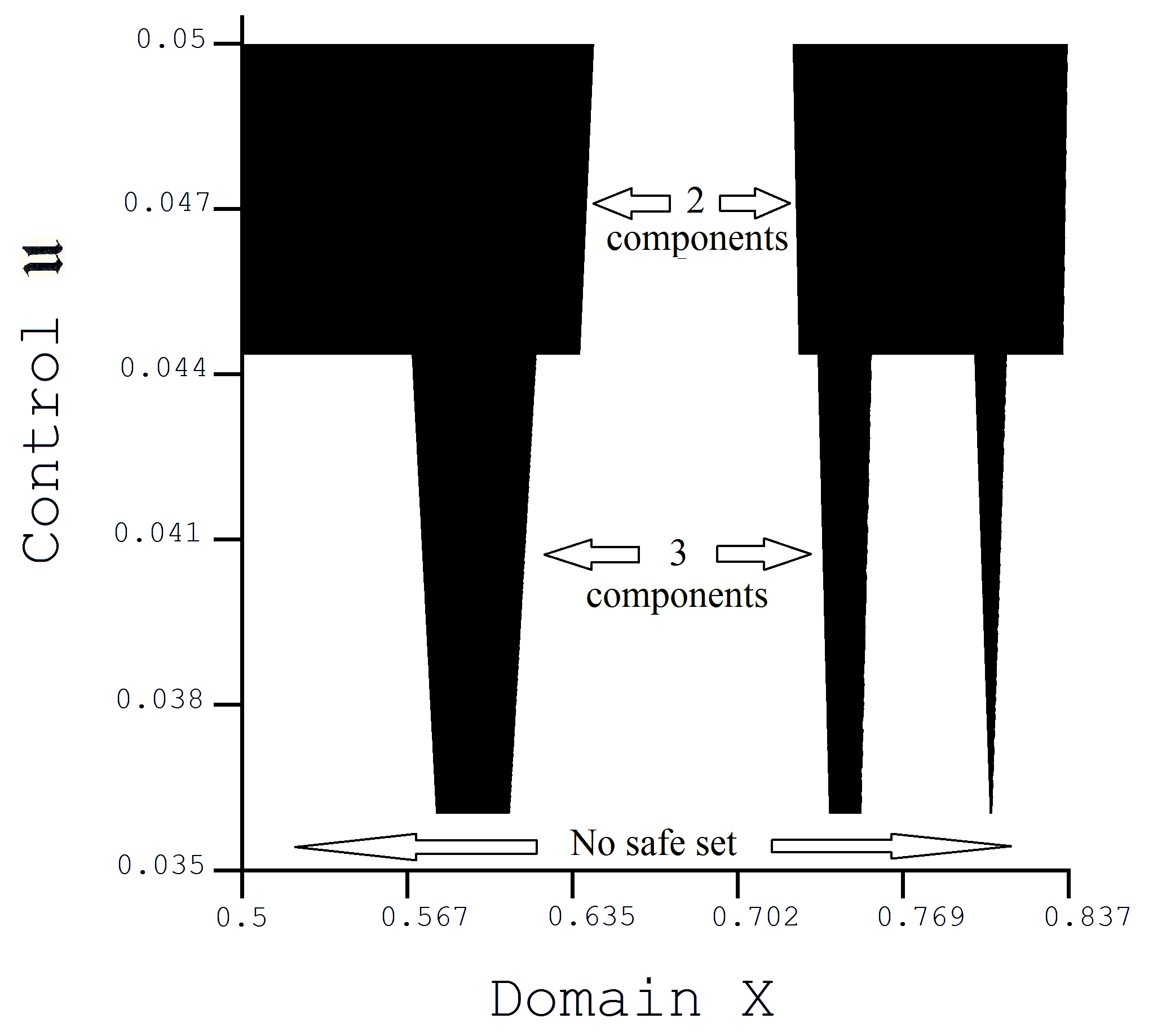}
\caption{\textbf{ The safe set of Eqns. \ref{eqn:tent_map}, \ref{eqn:control}, as a function of \boldmath $\UB$, with $\CB$ fixed at $0.05$ \unboldmath.}}
\label{fig:n_profile}
\end{figure}

(i) At $\UB\approx 0.45$, the distance between the two component intervals is $2\UB$, which results in one of the components splitting discontinuously into two smaller components as $\UB$ is decreased. We call such a bifurcation a \textbf{split bifurcation} in the next section. 
\\(ii) At $\UB\approx 0.0357$, one of the component intervals shrinks to a single point, which in general results in one or more components disappearing as $\UB$ is decreased. Here, the entire safe set actually vanishes. We call such a bifurcation a \textbf{vanishing point bifurcation} in the next section. 

\section{Continuity of Safe sets}
\subsection{The bifurcation theorem}
Throughout this chapter, we investigate the system described in Eqn. \ref{eqn:control} and assume that the target $Q$ is a closed, non-degenerate interval $[A, B]$, where $A < B$. $S_{\UB,\CB}$ will denote the maximum safe set at control bound $\UB$ and perturbation bound $\CB$. Recall that the maximum safe set for the target set $Q$ is the set of states $x$ starting from which a trajectory can always remain in $Q$, no matter what admissible perturbations occur. 

\begin{definition} $S_{\UB,\CB}$ is \textbf{continuous} with respect to $\UB$ and $\CB$ at $(\UB_0, \CB_0)$ if when $(\UB,\CB)$ is varied in some sufficiently small neighborhood $\mathcal{N}$ of $(\UB_0, \CB_0)$, the number of components in $S_{\UB,\CB}$ remains the same and the boundaries change continuously with $(\UB,\CB)$.
\end{definition}

Note that for the asymmetric tent map (Eqn. \ref{eqn:tent_map}), $f$ is not differentiable at $0.7$ and this point is also never part of the safe set.

The following theorem, which lays down sufficient conditions for bifurcations / discontinuities in the safe set to occur, is proved in Section \ref{proof_bifur} after some definitions are introduced.

\begin{theorem}[Bifurcation theorem]\label{thm:bifur_1}
Let the map $f$ (in Eqn. \ref{eqn:control}) be continuous, piecewise expanding and piecewise $C^1$. Let $Q$ be a compact interval. Let $0<\UB<\CB$ be the control and perturbation bounds respectively and assume that there is a nonempty maximum safe set $S_{\UB,\CB}$ and that $f$ is differentiable in a neighborhood of $S_{\UB,\CB}$. Then if a bifurcation of the safe set $S=S_{\UB,\CB}$ occurs at $(\UB,\CB)$ then at least one of these conditions hold :\\
\textbf{(B1)} A component interval of $S$ is a point.\\
\textbf{(B2)} The gap between two adjacent intervals of $S$ equals $2\UB$.\\
Moreover, (B1) is also sufficient for a bifurcation to occur.
\end{theorem}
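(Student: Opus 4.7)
The plan is to reduce the question to an implicit equation for the endpoints of the components of $S$ and then apply the implicit function theorem. Rewriting the safeness condition as
\[
f(x) + [-\CB, \CB] \;\subseteq\; N_\UB(S) \;:=\; S + [-\UB, \UB],
\]
one sees that the maximum safe set is the largest compact $S\subseteq Q$ satisfying this for every $x\in S$. Let the components of $S$ be the intervals $I_k=[a_k,b_k]$, $k=1,\dots,n$, ordered left to right. Because $f$ is differentiable on a neighborhood of $S$ and piecewise expanding, $f$ is strictly monotone on each $I_k$, so $f(I_k)$ is an interval with endpoints $f(a_k)$, $f(b_k)$.

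Assume now that (B1) and (B2) both fail, so every $I_k$ is nondegenerate and every gap $a_{k+1}-b_k$ differs from $2\UB$. Then the connected components of $N_\UB(S)$ form well-defined clusters $J_1,\dots,J_m$, each equal to $[a_{p_\ell}-\UB,\,b_{q_\ell}+\UB]$ for some block $p_\ell\le q_\ell$ of consecutive indices. Since $f(I_k)+[-\CB,\CB]$ is connected, it lies in exactly one cluster $J_{\ell(k)}$, and maximality of $S$ forces both endpoints of this thickened image to coincide with the endpoints of $J_{\ell(k)}$ (else $I_k$ could be enlarged). This yields a system of $2n$ equations, one per endpoint, each of the form
\[
f(e_i) \;=\; e_{\phi(i)} + c_i, \qquad c_i\in\{\CB-\UB,\;\UB-\CB\},
\]
where $\phi$ is a map from the $2n$ endpoints to themselves determined by the monotonicity types on the $I_k$ and by the cluster structure. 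Endpoints pinned at $A$ or $B$ satisfy the trivial equation $e_i=A$ or $e_i=B$ and are handled in the same framework.

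Denote this system by $F(\vec e;\UB,\CB)=0$. Its Jacobian in $\vec e$ is $\operatorname{diag}(f'(e_i))-P$, where $P$ is the $\{0,1\}$-incidence matrix of $\phi$. A nullvector $\delta\vec e$ satisfies $f'(e_i)\,\delta e_i = \delta e_{\phi(i)}$ for every $i$. Since $\phi$ acts on a finite set, every orbit eventually enters a cycle $i_0\to i_1\to\dots\to i_L=i_0$, along which
\[
\left(\prod_{s=0}^{L-1} f'(e_{i_s})\right)\delta e_{i_0} \;=\; \delta e_{i_0};
\]
each $|f'(e_{i_s})|>1$ by piecewise expansion, so the product has modulus strictly greater than $1$, forcing $\delta e_{i_0}=0$. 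Backward propagation along $\phi$-preimages then gives $\delta\vec e=0$, so the Jacobian is nonsingular. The implicit function theorem provides $C^1$ dependence of $\vec e$, and hence of the components of $S$, on $(\UB,\CB)$ throughout a neighborhood in which neither the combinatorial cluster structure nor the monotonicity of $f$ on each $I_k$ changes. This rules out a bifurcation at $(\UB,\CB)$, proving the contrapositive of the necessity statement.

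For the sufficiency of (B1), suppose $I_k=\{x_\ast\}$. The two endpoint equations at $a_k=b_k=x_\ast$ reduce by subtraction to a single scalar relation of the form $2(\CB-\UB)=b_{q_{\ell(k)}}-a_{p_{\ell(k)}}$, which is codimension one in the parameters. A small perturbation of $\UB$ with $\CB$ fixed breaks this equality; a direct computation using $f'(x_\ast)\ne 0$ shows that on one side of the critical value the two boundary equations prescribe an empty interval (the component vanishes), while on the other side they prescribe a nondegenerate interval of length of order $|\Delta\UB|$ (a new proper component is born). Either way the number or the length of a component changes discontinuously, which is a bifurcation. The most delicate step is the cycle argument in the Jacobian analysis: one must verify that $\phi$ is a well-defined total map on the endpoints, correctly accounting for endpoints pinned by $Q$ and for the exclusion of points where $f$ is not differentiable, which the hypothesis on $f$ precisely guarantees.
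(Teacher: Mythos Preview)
Your approach is essentially the paper's: encode the endpoints of the components of $S$ by equations of the form $f(e_i)=e_{\phi(i)}+c_i$, use maximality to tie each image endpoint to a cluster endpoint, and apply the implicit function theorem after checking that the Jacobian $\operatorname{diag}(f'(e_i))-P$ is nonsingular because $|f'|>1$. Your cycle/nullvector computation is exactly the content of the matrix lemma the paper proves in an appendix, so the necessity half is the same argument in different clothing.

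There are, however, two real gaps. First, the implicit function theorem gives you a smooth family of solutions to the endpoint equations, but you never explain why this family \emph{is} the maximal safe set for nearby parameters rather than merely some set satisfying the same algebraic relations. The paper closes this gap by a monotonicity argument (for $\UB'<\UB$ one has $S_{\UB',\CB}\subseteq S_{\UB,\CB}$, so the unique nearby IFT solution must be the boundary of $S_{\UB',\CB}$) together with a separate lemma showing that $S_{\UB,\CB}$ is always upper-continuous in $\UB$ via $S_{\UB,\CB}=\bigcap_{\delta>0}S_{\UB+\delta,\CB}$, and a second lemma handling the $\CB$ direction through the change of variables $(\UB,\CB)\mapsto(\UB,\CB-\UB)$. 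Your proposal skips all of this and simply asserts ``$C^1$ dependence \dots\ of the components of $S$''; that step needs the identification argument.

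Second, your sufficiency proof for (B1) is too sketchy to stand on its own. Reducing to $2(\CB-\UB)=b_{q}-a_{p}$ is fine, but ``a direct computation \dots\ shows that on one side the equations prescribe an empty interval'' is not a proof: $b_q$ and $a_p$ themselves move with the parameters, and you have not shown that the degenerate component actually disappears rather than persisting as a point. The paper's argument here is more elementary and avoids this difficulty: since $\bar B(f(p),\CB)$ must share a boundary point with its cluster in $S+\UB$, and since $S_{\UB-\delta,\CB}+(\UB-\delta)$ is strictly contained in $S_{\UB,\CB}+\UB$ for any $\delta>0$, the ball $\bar B(f(p),\CB)$ can no longer fit, so $p\notin S_{\UB-\delta,\CB}$ and the component vanishes. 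You should replace your IFT-flavored sketch with this direct containment argument.
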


If a bifurcation occurs at $(\UB,\CB)$, we say that it is a \textbf{vanishing point bifurcation} if (B1) holds and a \textbf{split bifurcation} if (B2) holds but not (B1).

\subsection {Definitions and properties}

As usual, $d(x,y)$ will be used denote the Euclidean distance between 2 points $x$ and $y$ on the real line. For a set $A\subset\mathbb{R}$, $\partial A$ denotes the boundary of $A$ , $A^C$ the complement of $A$ and $\bar{A}$ the closure of $A$.

\begin{definition} Let $X\subset \mathbb{R}$. Then for every $u>0$, let \boldmath $X+u$ \unboldmath denote the closed set of points that are within distance $u$ from $X$. In other words, $X+u$ is the set $\{x\in\mathbb{R}|\ \exists y\in X\ni d(x, y)\leq u\}$. An equivalent definition is $X+u = \underset{x\in X}{\cup}\bar{B}(x,u)$, where $\bar{B}(x,u)$ represents a closed ball of radius $u$ and center $x$.
\end{definition}
Thus if $X$ is compact, then $X+u$ is a compact set.

For a given compact set $Q$, control bound $\UB$ and disturbance bound $\CB$, the following equation from \cite{PartialControl_2} gives an equivalent formulation for a set $S\subseteq Q$ to be a safe set. It is a restatement of the property that if $x\in S$, then $f(x)+\xi$ is within distance $u$ of $S$.
\begin{equation}\label{eqn:safe_set_1}
f(S)+\CB\subseteq S+\UB
\end{equation}
Equation (\ref{eqn:safe_set_1}) leads to a constructive proof of existence of the safe set, as shown in \cite{PartialControl_2}. It was called the \textbf{sculpting algorithm} as it produces the safe set as the limit set of an infinite sequence of diminishing compact sets. In general, iterative methods leading to invariant or optimal sets for nonlinear constraint problems often result in sets which are maximal with respect to these properties but with high geometric complexity. They were first employed in feedback-control systems in \cite{Reachability1}, \cite{Reachability2} and \cite{Reachability3}.

\textbf{Maximum safe set.} Let the parameters $\UB$ and $\CB$ be fixed. Any union of safe sets corresponding to $(\UB,\CB)$ is also a safe set for $(\UB,\CB)$; hence, the union of all safe sets for these parameters is the unique maximal safe set. Since the closure of a safe set is also a safe set, the maximum safe set must be a compact set if $Q$ is compact. 

If $f : Q\rightarrow \mathbb{R}$ is continuous, $Q$ compact and $0 < \UB < \CB$, then a maximal safe set \boldmath $S_{\UB,\CB}$ \unboldmath satisfies the stronger equation (see Appendix \ref{lemma:maximal}) :
\begin{equation}
\label{eqn:maximum_safe}
f(S_{\UB,\CB})+\CB=[f(Q)+\CB]\cap[S_{\UB,\CB}+\UB]
\end{equation}

Henceforth, the term ``safe set'' will be used to denote ``maximum safe set".

The following definition of a safe set has already been introduced :

\begin{definition} $S_{\UB,\CB}$ is \textbf{continuous} with respect to $\UB$ and $\CB$ at $(\UB_0, \CB_0)$ if $(\UB,\CB)$ is varied in a neighbourhood $\mathcal{N}$ of $(\UB_0, \CB_0)$, then the number of components in $S_{\UB,\CB}$ remains the same and the boundaries change continuously with $(\UB,\CB)$.
\end{definition}

\subsection {Proof of the bifurcation theorem}\label{proof_bifur}

The Bifurcation Theorem \ref{thm:bifur_1} gives necessary conditions condition for bifurcations to occur. We will first prove that condition (B1) implies that a bifurcation occurs.

\textbf{Suppose (B1) occurs.} That is, one of the components of the safe set $S_{\UB,\CB}$ is a single point $\{p\}$. Therefore, the closed ball $\bar{B}(f(p),\CB)$ is a subset of $S_{\UB,\CB}+\UB$. In fact, the component interval $I$ of $S+\UB$ which contains $\bar{B}(f(p),\CB)$ shares a boundary point with $\bar{B}(f(p),\CB)$. For otherwise, if $\bar{B}(f(p),\CB)$ is in the interior of $I$, then since $f$ is continuous, by choosing a small interval $J$ around $p$, $f(J)+\CB\subset I$ will be satisfied. Then $S_{\UB,\CB}\cup{J}$ is a larger safe set, contradicting the maximality of $S_{\UB,\CB}$. So $I$ does share a boundary point with $\bar{B}(f(p),\CB)$.

Now keeping $\CB$ fixed, if $\UB$ is decreased, for every $\delta>0$, the components of $S_{\UB,\CB}+(\UB-\delta)$ will shrink. Since $S_{\UB-\delta,\CB}\subseteq S_{\UB,\CB}$, we get that $S_{\UB-\delta,\CB}+(\UB-\delta)$ is strictly in the interior of $S_{\UB,\CB}+\UB$ . Therefore, $\bar{B}(f(p),\CB)$ cannot be a subset of $S_{\UB-\delta,\CB}+(\UB-\delta)$ and hence $p\notin S_{\UB-\delta,\CB}$. Therefore, the component $\{p\}$ vanishes and a bifurcation occurs.

\textbf{(B1), (B2) are necessary.} We will prove that if neither of conditions (B1) and (B2) are satisfied at some $(\UB_0, \CB_0)$, then the safe set $S_{\UB,\CB}$ changes continuously with $(\UB,\CB)$ for $(\UB,\CB)$ near $(\UB_0,\CB_0)$, ie , the number of components in $S_{\UB,\CB}$ remains constant and if $x_1, x_2, \ldots$ are its boundary points, then they can be expressed as continuous functions of $(\UB,\CB)$ : $x_1(\UB,\CB), x_2(\UB,\CB), \ldots$.

We will first track how the safe set changes when $\CB$ is kept fixed at $\CB_0$ and $\UB$ is decreased below $\UB_0$. Since $\CB$ will be kept constant, $S_\UB$ will be used to denote the safe set $S_{\UB,\CB}$. The proof has two parts : first we establish the equations which the boundary points of $S_{\UB_0}$ satisfy and then we invoke the implicit function theorem to prove that the solutions to these equations vary continuously with $\UB$ and hence, so do the boundaries of $S_{\UB}$. 

Since $S_\UB$ is compact, it is a disjoint union of closed, bounded intervals. So condition (B1) does not hold iff all of these intervals are proper, that is, have non-zero length.

For each boundary point $x_i$, consider the closed ball $\bar{B}(f(x_i),\CB_0)$. By Eqn. \ref{eqn:safe_set_1}, at least one of the following two cases must be satisfied :
\begin{enumerate}
\item $\bar{B}(f(x_i),\CB_0)$ lies in the interior of $S_{\UB_0}+\UB_0$. Then $x_i$ must be a point lying on the boundary of $Q$, otherwise, $S_{\UB_0}$ would not have been maximal. Then if we expect $S_\UB$ to change continuously for $\UB$ near $\UB_0$, then $\bar{B}(f(x_i),\CB_0)$ continues to remain in the interior of $S_\UB+\UB$ and hence $x_i$ continues to be an element of $S_u$. Since it is still on the boundary of $Q$, $x_i(\UB)=x_i(\UB_0)$ is always a boundary point of $S_{\UB}$ for $\UB$ near $\UB_0$. In particular, 
\begin{equation} \label{eqn:non_moving}
\mbox{For every } \UB \mbox{ near } \UB_0, f(x_i)=\mbox{constant}
\end{equation}
\item $\bar{B}(f(x_i),\CB_0)$ shares a boundary point with $S_{\UB_0}+\UB_0$. Then $f(x_i)\pm\CB_0=x_{\sigma(i)}\pm \UB_0$, where $x_{\sigma(i)}$ is some boundary point of $S_{\UB_0}$ which is at distance $\UB_0$ from a boundary point of $S_{\UB_0}+\UB_0$. If $S_\UB$ changes continuously for $\UB$ near $\UB_0$, then by the maximality of $S_\UB$, the following equation will hold :
\begin{equation} \label{eqn:moving}
\mbox{ For every } \UB \mbox{ near } \UB_0, f(x_i)\pm\CB_0=x_{\sigma(i)}\pm \UB
\end{equation}
\end{enumerate}

Now $S_\UB$ can have countably many boundary points but since $S_\UB+\UB$ is a compact set with each component having diameter $\geq 2\UB$, $S_\UB+\UB$ has a finite number of components. Therefore, the range of values of $\sigma(i)$ is finite and correspond to those $x_{\sigma(i)}$-s which are at a distance of $\UB$ from the boundary of $S_{\UB}+\UB$. Without loss of generality, let the boundary points be renumbered so that $x_1,\ldots,x_n$ are the $x_{\sigma(i)}$-s.

Therefore, from Eqns. \ref{eqn:non_moving} and \ref{eqn:moving}, it is clear that all the $x_i$-s are determined by the vector $X=(x_1,\ldots,x_n)$ alone. 
Define $F:\mathbb{R}^n\rightarrow\mathbb{R}^n$ as :
\begin{equation} 
F(X)=
\left(\begin{array}{c}
f(x_1)\\
\vdots \\
f(x_n)
\end{array}\right)
\end{equation}
Then all first $n$ equations can be collected together as : 
\begin{equation}
F(X)=MX+\Delta(\UB)
\end{equation}
where : $M$ is an $n$-by-$n$ matrix all of whose entries are $0$ or $1$ and which has at most one non-zero entry in each row; $\Delta(\UB)$ is a column vector whose entries $\in\{0,\pm\UB\pm\CB_0\}$.

This is a $C^1$ system, which by the implicit function theorem has a $C^1$ solution iff its Jacobian with respect to $X$ is invertible at $\UB=\UB_0$. But this Jacobian $=J-M$, where $J$, the Jacobian of $F$ is a diagonal matrix $\emph{diag}(J_1,\ldots,J_n)$, where $J_i=f'(x_i(\UB_0))$. By Appendix \ref{lemma:mat_cyc}, the factors of $det(J-M)$ are either of the form (i) $J_i$ or (ii) $J_{i_1}\ldots J_{i_k}-1$. Since by assumption, $f$ does not have a zero derivative on the safe set, $J_i\neq 0$. A factor of the form (ii) is also $\neq 0$ since $|f'|>1$ everywhere. 

Therefore, there exist continuous solutions in $x_1(\UB),x_2(\UB),\ldots$ of the defining equation Eqn. \ref{eqn:safe_set_1} of a safe set. When $\UB$ is decreased below $\UB_0$, the safe set $S_{\UB}$ is always a subset of $S_{\UB_0}$. Since these solutions are also the unique solutions around $\UB=\UB_0$, $x_1(\UB),x_2(\UB),\ldots$ must the boundary points of the maximum safe set $S_{\UB}$. This means that the number of components of $S_{\UB}$ remain fixed and change continuously. \qed.

Now that we have proved that $S_{\UB,\CB_0}$ changes continuously as $\UB\rightarrow\UB_0^-$, the proof to the Bifurcation theorem will be complete if the following two things can be proven : 
\begin{enumerate}[(i)]
\item $S_{\UB,\CB_0}$ changes continuously as $\UB\rightarrow\UB_0$.
\item $S_{\UB,\CB}$ changes continuously as $(\UB,\CB) \rightarrow (\UB_0,\CB_0)$.
\end{enumerate}

This will be done sequentially through the following two lemmas.

\begin{lemma}[Upper continuity of safe sets]
\label{lem:Upper_cont_safe}
If $S_{\UB,\CB_0}$ changes continuously as $\UB\rightarrow\UB_0^-$, then it also changes continuously as $\UB\rightarrow\UB_0$.
\end{lemma}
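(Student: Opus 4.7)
Write $S_\UB := S_{\UB,\CB_0}$. The implicit function theorem step in the preceding proof already produces $C^1$ functions $x_1(\UB),x_2(\UB),\ldots$ satisfying the boundary equations \ref{eqn:non_moving}--\ref{eqn:moving} on a full two-sided neighborhood of $\UB_0$; what was argued there was only that on the \emph{left} these agree with the boundary points of the maximum safe set. The plan is to establish the analogous agreement on the right. Let $S^*_\UB$ be the compact set whose boundary points are the $x_i(\UB)$ paired as in $S_{\UB_0}$; by construction it satisfies the safe-set equation \ref{eqn:safe_set_1}, so by maximality of $S_\UB$ one has $S^*_\UB\subseteq S_\UB$. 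It remains to prove the reverse inclusion for $\UB$ just above $\UB_0$.

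The key tool is monotonicity in $\UB$: since $S+\UB\subseteq S+\UB'$ whenever $\UB\leq\UB'$, any safe set for $(\UB,\CB_0)$ is automatically a safe set for $(\UB',\CB_0)$, so $S_\UB\subseteq S_{\UB'}$. Consequently $T:=\bigcap_{\UB>\UB_0}S_\UB$ is a decreasing intersection of compact sets containing $S_{\UB_0}$. The central claim is that $T$ is itself a safe set at parameter $(\UB_0,\CB_0)$, from which maximality forces $T=S_{\UB_0}$. For any $x\in T$ one has $\{f(x)\}+\CB_0\subseteq S_\UB+\UB$ for every $\UB>\UB_0$, and a standard compactness argument identifies $\bigcap_{\UB>\UB_0}(S_\UB+\UB)$ with $T+\UB_0$: given $y$ in the intersection, choose $z_\UB\in S_\UB$ with $|y-z_\UB|\leq\UB$, extract a subsequential limit $z$, and observe that $z\in T$ with $|y-z|\leq\UB_0$. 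This gives $f(T)+\CB_0\subseteq T+\UB_0$, as required.

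Once $T=S_{\UB_0}$, one-sided Hausdorff convergence $S_\UB\to S_{\UB_0}$ as $\UB\to\UB_0^+$ follows immediately, since any sequence $y_n\in S_{\UB_n}$ with $d(y_n,S_{\UB_0})\geq\epsilon$ would subsequentially converge into $T\setminus S_{\UB_0}=\emptyset$. Because (B1) fails at $\UB_0$, the components of $S_{\UB_0}$ are proper closed intervals with positive gaps, and because (B2) fails those gaps are all distinct from $2\UB_0$; by continuity of the $x_i(\UB)$ both properties persist for $\UB$ near $\UB_0$, so $S^*_\UB$ has the same component structure throughout a neighborhood. Combining $S^*_\UB\subseteq S_\UB$ with the Hausdorff squeeze $S_\UB\subseteq S_{\UB_0}+\epsilon$ forces each component of $S_\UB$ to lie in a small neighborhood of a single component of $S^*_\UB$. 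Any putative extra component $J$ of $S_\UB$ disjoint from $S^*_\UB$ would carry a pair of boundary points satisfying the local equations \ref{eqn:non_moving}--\ref{eqn:moving}; letting $\UB\to\UB_0^+$ these would subsequentially produce additional solutions of the $\UB_0$-system near $S_{\UB_0}$ distinct from the $x_i(\UB_0)$, contradicting the local uniqueness clause of the implicit function theorem. Hence $S_\UB=S^*_\UB$ and its boundaries vary continuously through $\UB_0$.

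The main obstacle is the identification $T=S_{\UB_0}$ --- specifically, the compactness interchange $\bigcap_{\UB>\UB_0}(S_\UB+\UB)=T+\UB_0$, which is what converts the easy monotonicity of safe sets in $\UB$ into the upper continuity claimed. The subsequent component-counting step is routine once both (B1) and (B2) are known to fail at $\UB_0$, as it reduces to Hausdorff convergence together with the uniqueness clause of the implicit function theorem used to prove the necessity direction of Theorem \ref{thm:bifur_1}.
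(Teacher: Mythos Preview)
Your proof is correct and follows the paper's strategy: both establish right-continuity unconditionally by showing that $T:=\bigcap_{\UB>\UB_0}S_\UB$ equals $S_{\UB_0}$, the key step being the compactness interchange $\bigcap_{\UB>\UB_0}(S_\UB+\UB)=T+\UB_0$, which the paper isolates as Appendix Lemma~\ref{lem:u_lemma} while you supply the subsequential-limit argument inline. You go somewhat further than the paper by explicitly converting Hausdorff convergence into component-level continuity via the uniqueness clause of the implicit function theorem, a passage the paper leaves tacit.
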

\begin{proof}
The lemma will be proved by showing that, in fact, safe sets are always continuous with $\UB$ from above, that is, it always changes continuously as $\UB\rightarrow\UB_0^+$. Since $S_{\UB_0+\delta,\CB_0}$ decreases with decreasing $\delta$, $S_{\UB_0+\delta,\CB_0}$ will decrease continuously to $S_{\UB_0,\CB_0}$ iff :
\begin{equation}
\mbox{For every } 0<\UB<\CB,\ S_{\UB,\CB}=\underset{\delta>0}{\cap}S_{\UB+\delta,\CB}.
\end{equation}
Let $\bar{S}$ denote the set $\underset{\delta>0}{\cap}S_{\UB+\delta,\CB}$.

Since $\CB$ is fixed in the above equation, it will be dropped from the notation. For all $\delta>0$, $S_\UB\subseteq S_{\UB+\delta}$ $\Rightarrow$ $S_{\UB}\subseteq\underset{\delta>0}{\cap}S_{\UB+\delta}$.

Thus, it remains to be proven that $\bar{S}=\underset{\delta>0}{\cap}S_{\UB+\delta}\subseteq S_{\UB}$. It would be sufficient to prove that $\bar{S}$ is a safe set. For all $\delta>0$, $f(S_{\UB+\delta})+\CB\subseteq S_{\UB+\delta}+(\UB+\delta)$ and $\bar{S}\subseteq S_{\UB+\delta}$

$\Rightarrow$ $f(\bar{S})+\CB\subseteq \underset{\delta>0}{\cap}[S_{\UB+\delta}+(\UB+\delta)]=\underset{\delta>0}{\cap}S_{\UB+\delta}+\UB$ by Appendix \ref{lem:u_lemma}.

$\Rightarrow$ $\bar{S}$ is a safe set for parameters $(\UB,\CB)$ $\Rightarrow$ $\bar{S}\subseteq S_{\UB}$, the maximum safe set at parameters $(\UB,\CB)$.\qed
\end{proof}

\begin{lemma}\label{lemma:Hauss_simple_cont}
$S_{\UB,\CB}$ changes continuously as $(\UB,\CB)\rightarrow (\UB_0,\CB_0)$ iff $S_{\UB,\CB_0}$ changes continuously as $\UB\rightarrow\UB_0^-$.
\end{lemma}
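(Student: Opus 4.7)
The $(\Rightarrow)$ direction is immediate: joint continuity of $S_{\UB,\CB}$ at $(\UB_0,\CB_0)$ restricts to continuity of $S_{\UB,\CB_0}$ along the slice $\CB=\CB_0$, which gives in particular one-sided continuity as $\UB\to\UB_0^-$.

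For $(\Leftarrow)$, I would first invoke Lemma \ref{lem:Upper_cont_safe} to upgrade the one-sided hypothesis to two-sided continuity of $S_{\UB,\CB_0}$ at $\UB_0$. The plan is then a Minkowski-sandwich resting on two facts. \emph{Monotonicity}: $S_{\UB,\CB}$ is non-decreasing in $\UB$ and non-increasing in $\CB$ under set inclusion, which follows directly from Eqn.~\ref{eqn:safe_set_1}. \emph{Minkowski shift invariance}: for every $\delta\geq 0$, $S_{\UB,\CB}\subseteq S_{\UB+\delta,\CB+\delta}$, obtained by Minkowski-adding $\delta$ to both sides of $f(S)+\CB\subseteq S+\UB$ and using the associativity $X+\alpha+\beta=X+(\alpha+\beta)$.

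Combining the two, for $\CB\geq\CB_0$ with $\delta:=\CB-\CB_0$,
\[
S_{\UB-\delta,\CB_0}\subseteq S_{\UB,\CB}\subseteq S_{\UB,\CB_0},
\]
and symmetrically for $\CB\leq\CB_0$ with $\delta:=\CB_0-\CB$,
\[
S_{\UB,\CB_0}\subseteq S_{\UB,\CB}\subseteq S_{\UB+\delta,\CB_0}.
\]
Either way $S_{\UB,\CB}$ is trapped between two $\CB=\CB_0$ safe sets whose control bounds both approach $\UB_0$ as $(\UB,\CB)\to(\UB_0,\CB_0)$; the two-sided continuity of $S_{\cdot,\CB_0}$ then squeezes the middle set onto $S_{\UB_0,\CB_0}$.

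The main obstacle, I expect, will be promoting this sandwich from Hausdorff-style convergence to the paper's stronger structural continuity (same number of components with continuously-varying boundaries), since a single component of the upper bound could a priori house several components of $S_{\UB,\CB}$. To close this gap I would replay the implicit function argument from the proof of the Bifurcation Theorem with both $\UB$ and $\CB$ treated as free parameters: the boundary equations become $F(X)=MX+\Delta(\UB,\CB)$, still affine in the parameters; the Jacobian $J-M$ with respect to $X$ is unchanged and invertible at $(\UB_0,\CB_0)$ by the same determinant argument; the implicit function theorem then yields a unique $C^1$ branch $X(\UB,\CB)$, which the Hausdorff sandwich forces to coincide with the boundary of $S_{\UB,\CB}$, giving the desired joint structural continuity.
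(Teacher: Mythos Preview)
Your approach is correct in spirit but takes a genuinely different route from the paper, and the final step is not quite closed.

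\textbf{What the paper does.} The paper changes coordinates to $(\UB,\CB-\UB)$ and proves a stronger fact than your shift inclusion: under the standing hypothesis that (B2) fails at $(\UB_0,\CB_0)$, one has the \emph{equality} $S_{\UB_0+\delta,\CB_0+\delta}=S_{\UB_0,\CB_0}$ for all small $\delta$. The reverse inclusion is obtained by a direct set-theoretic argument: if $x\in S(\delta):=S_{\UB_0+\delta,\CB_0+\delta}$ then $\bar B(f(x),\CB_0+\delta)\subset S(\delta)+(\UB_0+\delta)$, and since no gap of the safe set equals $2\UB_0$, shrinking the thickening by $\delta$ on each side gives $\bar B(f(x),\CB_0)\subset S(\delta)+\UB_0$, so $S(\delta)$ is already safe for $(\UB_0,\CB_0)$. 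Thus the safe set is \emph{locally constant} along the diagonal direction, and joint continuity follows from the already-established $\UB$-continuity plus the smooth coordinate change. No promotion from Hausdorff to structural continuity is needed.

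\textbf{Where your argument stands.} Your sandwich $S_{\UB-\delta,\CB_0}\subseteq S_{\UB,\CB}\subseteq S_{\UB,\CB_0}$ (and its mirror) is correct and does yield Hausdorff convergence. The two-parameter IFT you propose also goes through: $\Delta(\UB,\CB)$ is affine and the Jacobian $J-M$ is unchanged. The gap is the last sentence, ``the Hausdorff sandwich forces [the IFT branch] to coincide with the boundary of $S_{\UB,\CB}$.'' The sandwich shows $S_{\UB,\CB}$ has \emph{at least} as many components as $S_{\UB_0,\CB_0}$ (each component of the lower bound sits in some component of $S_{\UB,\CB}$), but it does not by itself rule out additional small components of $S_{\UB,\CB}$ lying in the slack between the lower and upper bounds. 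To close this you would still need to argue that the maximal safe set has the same combinatorial type (same number of components, same $\sigma$) near $(\UB_0,\CB_0)$ before invoking IFT uniqueness; that is precisely the step the paper's diagonal-constancy bypasses.

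\textbf{Comparison.} Your route is more ``metric'' (squeeze plus IFT), the paper's is more ``algebraic'' (exact diagonal invariance). The paper's argument is shorter and avoids the combinatorial-type issue entirely; yours would become complete if you either (i) prove the missing reverse inclusion $S_{\UB+\delta,\CB+\delta}\subseteq S_{\UB,\CB}$ directly from the failure of (B2), which is exactly the paper's step, or (ii) supply an independent reason why no spurious components can appear inside the sandwich.
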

\begin{proof}
Consider the coordinates $(\UB,\CB-\UB)$, which are obtained by a smooth, invertible transformation of the coordinates $(\UB,\CB)$. Therefore, it will be equivalent to prove that the safe set $S$ changes continuously with these new coordinates. Continuity with respect to $\UB$ follows from Lemma \ref{lem:Upper_cont_safe}. To prove continuity with respect to the coordinate $\CB-\UB$, we will prove that in fact, if condition (B2) is not satisfied at $(\UB_0,\CB_0)$, then for sufficiently small $\delta$, $S_{\UB+\delta,\CB+\delta}=S_{\UB,\CB}$.

Let us assume that $\delta>0$. The proof for $\delta<0$ will be analogous. To prove the above identity, first note that
\begin{equation}
\mbox{For every }0<\UB<\CB \mbox{ and every } \delta>0,\ S_{\UB,\CB}\subseteq S_{\UB+\delta,\CB+\delta}.
\end{equation}
This follows from Eqn. \ref{eqn:safe_set_1}. So it remains to prove that if (B1) does not hold at $(\UB_0,\CB_0)$, then for sufficiently small $\delta$, $S_{\UB_0+\delta,\CB_0+\delta}\subseteq S_{\UB_0,\CB_0}$, or equivalently, $S_{\UB_0+\delta,\CB_0+\delta}$ is a safe set for the parameters $(\UB_0,\CB_0)$.

Consider any $x\in S_{\UB_0+\delta,\CB_0+\delta}$, which shall be abbreviated as $S(\delta)$ for ease of notation. By definition, the closed ball $\bar{B}(f(x),\CB_0+\delta)\subset S(\delta)+(\UB_0+\delta)$. Then the smaller ball $\bar{B}(f(x),\CB_0)$ must be at a distance of at least $\delta$ from the boundary of $S(\delta)+(\UB_0+\delta)$. Since (B2) is not satisfied at $(\UB_0,\CB_0)$, for every sufficiently small $\delta$, $S(\delta)+\UB_0$ is precisely the set of points in the interior of $S(\delta)+(\UB_0+\delta)$ which are at a distance of at least $\delta$ from the boundary of $S(\delta)+(\UB_0+\delta)$.\\
$\Rightarrow$ $\bar{B}(f(x),\CB_0)\subset S(\delta)+\UB_0$,
$\Rightarrow$ $f(S(\delta))+\CB_0\subseteq S(\delta)+\UB_0$,
$\Rightarrow$ $S(\delta)=S_{\UB_0+\delta,\CB_0+\delta}$ is a safe set corresponding to the parameters $(\UB_0,\CB_0)$. \qed
\end{proof}

This concludes the proof of the Bifurcation theorem. \qed

\subsection {Evolution of the safe set for the given example}

The Bifurcation Theorem \ref{thm:bifur_1} says that for non-bifurcation points, the boundary points of the maximum safe set changes continuously. In this section, we will use the main example of the asymmetric tent map (Eqn. \ref{eqn:tent_map}) to illustrative that the boundary points are the smooth solutions to a system of differential equation. The main theorem is stated here as claim and we proceed to prove it for the specific case under consideration. We will re-visit the case shown in Fig. (\ref{fig:Safeset_eg_2}), where the safe set corresponding to $\UB=0.04$ and $\CB=0.05$ has the three components, $[a_1, b_1]\approx [0.5747, 0.6141]$, $[a_2, b_2]\approx [0.7371, 0.7542]$ and $[a_3, b_3]\approx [0.8019, 0.8084]$. We can make the following claim,

\textbf{Claim :} For $u$ near $0.04$, the boundary points of the safe set change continuously as functions of $\UB$, and satisfy the following :
\[
\left( \begin{array}{c}
\frac{da_1}{d\UB} \\
\frac{db_1}{d\UB} \\
\frac{da_2}{d\UB} \\
\frac{db_3}{d\UB} 
\end{array}\right)
=\frac{1}{f'(a_1)f'(a_2)f'(b_3)-1}
\left( \begin{array}{c}
f'(b_3)-f'(a_2)f'(b_3)-f'(a_1)f'(a_2) \\
f'(a_2)(f'(b_1)^{-1}(f'(a_1)f'(b_3)-1)-f'(a_1)) \\
f'(a_1)f'(b_3)-f'(a_1)f'(a_2)-1 \\
1-f'(a_1)f'(a_2)-f'(a_2)
\end{array}\right). 
\]
and $f(b_2)=a_2+\CB-\UB$, $f(a_3)=b_1+\UB-\CB$.

\textbf{Proof :} Neither of the two bifurcation conditions (i) and (ii) of the Bifurcation Theorem 3.1 are satisfied, since\\
B1 : the two gaps between adjacent intervals are $a_2-b_1\approx 0.123$, $a_3-b_2\approx 0.0477$ while $2\UB=0.08$.\\
B2 : none of the component intervals is a single point.

Denote the safe set at $\UB$ by $S_\UB$, with $\CB$ fixed at $0.04$. According to Def. 3.3, the defining equation for a safe set is $f(S_\UB)=f(Q)\cap (S_\UB+\UB-\CB)$. For $\UB$ near $0.04$, this relation simplifies to :

$f(S_\UB)=S_\UB+\UB-\CB$, because at $\UB=0.04$, $S_\UB+\UB-\CB=[a_1+\CB-\UB, b_1-\CB+\UB]\cup[a_2+\CB-\UB, b_3-\CB+\UB]\approx[0.5847, 0.6043]\cup[0.7471, 0.7985]$ and $f(Q)\approx[0, 0.91] \supset S_\UB+\UB-\CB$. These relations hold for all values of $\UB$ sufficiently near $0.04$, because $f(S_\UB)=S_\UB+\UB-\CB$ will be satisfied here and because of condition (i), $S_\UB+\UB-\CB$ will have three intervals which can be explicitly written down as above as continuous functions of $\UB$.

Moreover we have :
\begin{equation}\label{eqn:sample_proof_system}
\left(\begin{array}{c}f(a_1)\\f(b_1)\\f(a_2)\\f(b_3)\end{array}\right)
=
\left(\begin{array}{c}a_2+\CB-\UB \\b_3-\CB+\UB \\a_3-\CB+\UB \\a_1+\CB-\UB \end{array}\right)
\end{equation}
and
\begin{equation}
f(b_2)=a_2+\CB-\UB, f(a_3)=b_1+\UB-\CB
\end{equation}

Note that in the system of equations given above, the variables $b_2$ and $a_3$ are completely determined by the variables $(a_1, b_1, a_2, b_3)$, hence it is sufficient to consider he solution in these $4$ variables. 

By the implicit function theorem, these $4$ variables, which have been represented as solutions to a set of equation, vary continuously with $\UB$ iff the Jacobian of the system of equations (\ref{eqn:sample_proof_system}) with respect to these $4$ variables is invertible, or equivalently, the equations can be differentiated with respect to $\UB$ to obtain an ordinary differential equation (ODE). The derivative of the above system of equations with respect to $u$ gives 
\[
\left( \begin{array}{cccc}
f'(a_1) & 0 & 0 & 0 \\
0 & f'(b_1) & 0 & 0 \\
0 & 0 & f'(a_2) & 0 \\
0 & 0 & 0 & f'(b_3)
\end{array} \right)
\left( \begin{array}{c}
\frac{da_1}{d\UB} \\
\frac{db_1}{d\UB} \\
\frac{da_2}{d\UB} \\
\frac{db_3}{d\UB} 
\end{array}\right) 
=
\left( \begin{array}{cccc}
0 & 0 & 1 & 0 \\
0 & 0 & 0 & 1 \\
0 & 0 & 0 & 1 \\
1 & 0 & 0 & 0 
\end{array} \right)
\left( \begin{array}{c}
\frac{da_1}{d\UB} \\
\frac{db_1}{d\UB} \\
\frac{da_2}{d\UB} \\
\frac{db_3}{d\UB} 
\end{array}\right)
+
\left( \begin{array}{c}
-1 \\
1 \\
1\\
-1 \\
\end{array} \right).
\]
Rearranging :
\[
\left( \begin{array}{cccc}
f'(a_1) & 0 & -1 & 0 \\
0 & f'(b_1) & 0 & -1 \\
0 & 0 & f'(a_2) & -1 \\
-1 & 0 & 0 & f'(b_3) \\
\end{array} \right)
\left( \begin{array}{c}
\frac{d(a_1)}{d\UB} \\
\frac{d(b_1)}{d\UB} \\
\frac{d(a_2)}{d\UB} \\
\frac{d(b_3)}{d\UB} \\
\end{array} \right)
=
\left( \begin{array}{c}
-1 \\
1 \\
1\\
-1 \\
\end{array} \right).
\]
The matrix on the left has determinant $f'(b_1)[f'(a_1)f'(a_2)f'(b_3)-1]$, which is non-zero since $|f'|>1$ everywhere except 0.7, which is not in the safe set. As a result it can be inverted to obtain an ordinary differential equation in $(a_1, b_1, a_2, b_3)$, which the reader can verify to be the one in the claim. Therefore, under the given conditions, the safe set not only changes continuously but its boundary points also satisfy an ODE.

\subsection {A closer look at the bifurcations}

\textbf{Split bifurcation.} According to the Bifurcation theorem (\ref{thm:bifur_1}), this situation occurs when the distance between two adjacent components of the safe set is equal to $2u$. As a result, either the safe set vanishes or a component splits into two or more components. The following lemma gives a set theoretic version of the split bifurcation condition :

\begin{lemma} 
\label{lemma:split_cond}
Let $X\subset\mathbb{R}$ be compact. Then no two adjacent connected components of $X$ are at distance $2\UB$ from each other iff for all sufficiently small $\delta>0$, $X+\UB-\delta=X+(\UB-\delta)$.
\end{lemma}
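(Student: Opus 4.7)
The plan is to analyze how the compound operation ``dilate by $\UB$, then erode by $\delta$'' (the left-hand side $X+\UB-\delta$) compares to the single dilation ``by $\UB-\delta$'' (the right-hand side), working component by component. First some structure: since $X$ is compact, so is $X+\UB$, and each component of $X+\UB$ has length at least $2\UB$, so $X+\UB$ has only finitely many components $I_k = [\alpha_k,\beta_k]$, each arising from a maximal chain of components of $X$ whose consecutive gaps are $\le 2\UB$. Moreover, the collection $\{g_i\}$ of adjacent gap-widths of $X$ can only accumulate at $0$, since infinitely many pairwise disjoint gaps of bounded-below width could not fit inside the convex hull of the bounded set $X$. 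Hence, under the hypothesis that no $g_i$ equals $2\UB$, the number $g^{\max} := \sup\{g_i : g_i < 2\UB\}$ is strictly less than $2\UB$ whenever this supremum set is nonempty.

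For the forward direction I would fix any $\delta$ with $0 < \delta < \min(\UB,\, \UB - g^{\max}/2)$. Inside a fixed component $I_k$, the erosion yields $[\alpha_k+\delta,\beta_k-\delta]$. On the right-hand side, the components of $X$ lying inside $I_k$ are re-fattened by $\UB-\delta$; they still all merge, because every internal gap $g_i$ satisfies $g_i \le g^{\max} < 2(\UB-\delta)$, and the merger again yields $[\alpha_k+\delta,\beta_k-\delta]$. Between distinct $I_k$'s the separating gap is strictly positive and corresponds to some $g_i > 2\UB$, which forbids merging on either side; matching endpoints then give agreement there as well, so the two sides agree globally.

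For the converse I would argue the contrapositive. Let $[a,b]$ and $[c,d]$ be adjacent components of $X$ with $c-b = 2\UB$, and set $m = b+\UB = c-\UB$. The fattened intervals $[a-\UB, b+\UB]$ and $[c-\UB, d+\UB]$ meet exactly at $m$, so they merge into a single component of $X+\UB$ containing $m$ in its interior; consequently $m \in (X+\UB)-\delta$ for all sufficiently small $\delta>0$. However, the nearest points of $X$ to $m$ are $b$ and $c$, both at distance exactly $\UB > \UB-\delta$, so $m \notin X+(\UB-\delta)$, and the two sides differ. The main obstacle I expect is the uniformity in the forward direction --- extracting one $\delta$ that handles all gaps simultaneously --- which hinges essentially on compactness of $X$, via the fact that the $g_i$ can accumulate only at $0$; without this, gaps could conceivably approach $2\UB$ from below without ever equaling it, defeating any fixed $\delta$.
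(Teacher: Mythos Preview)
The paper states this lemma without proof, so there is nothing in the paper to compare your argument against. Interpreting $X+\UB-\delta$ as the $\delta$-erosion of the dilation $X+\UB$ --- which is the only reading consistent with how the paper later invokes this identity inside the proof of Lemma~\ref{lemma:Hauss_simple_cont} --- your argument is correct in both directions.

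The crux of the forward direction is the uniformity issue you flag at the end: one needs a single $\delta$ that works across all chains of components simultaneously. Your observation that the adjacent gap-widths $\{g_i\}$ of a bounded $X\subset\mathbb{R}$ can accumulate only at $0$ (otherwise infinitely many disjoint gaps of width bounded below would have infinite total length) is exactly what guarantees that $g^{\max}=\sup\{g_i:g_i<2\UB\}$ is attained and hence strictly below $2\UB$; the endpoint bookkeeping inside and between the finitely many components $I_k$ of $X+\UB$ then goes through. For the converse, your explicit witness $m=b+\UB=c-\UB$ lies in the interior of the merged component of $X+\UB$ (so it survives erosion by any small $\delta$) while sitting at distance exactly $\UB$ from $X$ (so it is excluded from $X+(\UB-\delta)$); this is correct and complete.
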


Only a split bifurcation can lead to a splitting into 2 or more new components. In Figure 7, which shows the measure of the safe set as a contour plot, two kinds of boundaries can be seen between the different regions in the plot. It is at these boundaries that the bifurcations occur. The values of $(\UB, \CB)$ where split bifurcations occur, must be those boundaries separating regions of the plot with different components. Though they are straight lines for the tent map, is systems like the logistic map, they are generic curves. Figure 8 showed such a split occurring at the point $(\CB=0.05; \UB=0.045)$, which is very near such a boundary.

\textbf{Vanishing point bifurcation.} In Figure 7, the vanishing point bifurcations occur are seen to be occurring along straight lines with slope 1. Theorem \ref{thm:shrink_slope_1} below proves that this indeed is the case. For any perturbation bound $\CB$, there is a value $u_{min}(\CB)$ , which is the minimum of all values of $\UB$ for which the safe set exists. The following theorem states that the set of pairs of the form ($(u_{min}(\CB),\CB)$ form straight lines with slope 1 almost everywhere, wherever the pair is not a split bifurcation point.

\begin{theorem}
\label{thm:shrink_slope_1}
If for some $\CB_0>0$, $(u_{min}(\CB_0),\CB_0)$ is not a split bifurcation point, then the graph of $u_{min}$ is a straight line with slope 1 near $\CB_0$.
\end{theorem}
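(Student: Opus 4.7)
The plan is to prove $u_{min}(\CB_0 + t) = u_{min}(\CB_0) + t$ for all small $t$ by combining the slope-$1$ invariance of the safe set with the $(B1)$-vanishing mechanism. Set $\UB_0 = u_{min}(\CB_0)$ and $S_0 = S_{\UB_0,\CB_0}$. First, $S_0$ is nonempty since by Lemma \ref{lem:Upper_cont_safe} it is the decreasing intersection $\bigcap_{\delta>0} S_{\UB_0+\delta,\CB_0}$ of nonempty compact sets. Because $S_{\UB_0-\delta,\CB_0}$ is empty for $\delta>0$ while $S_0$ is not, a bifurcation occurs at $(\UB_0,\CB_0)$, and the hypothesis that this is not a split bifurcation forces $(B1)$ via Theorem \ref{thm:bifur_1}: some component of $S_0$ is a single point.

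The key structural step, and the main obstacle, is to rule out non-degenerate components of $S_0$ entirely, so that $S_0 = \{p_1,\ldots,p_k\}$ is a finite set. The implicit function theorem machinery of Section \ref{proof_bifur} constructs smooth solutions for the boundary points of each component, with Jacobian that is invertible whenever $|f'|>1$ on the safe set; the two boundary points of any non-degenerate component of $S_0$ would therefore vary continuously with $\UB$ in both directions, and such a component would persist in $S_{\UB_0-\delta,\CB_0}$ for small $\delta>0$, contradicting $\UB_0 = u_{min}(\CB_0)$. The subtlety is that $(B1)$ forces a pair of boundary variables to coincide at each single-point component, so one must carefully check that the implicit function argument of Section \ref{proof_bifur} still applies to the subsystem corresponding to a putative non-degenerate component in the presence of these degeneracies.

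Once $S_0 = \{p_1,\ldots,p_k\}$ is established, the upper bound $u_{min}(\CB_0+t) \leq \UB_0+t$ follows from the slope-$1$ invariance proved inside Lemma \ref{lemma:Hauss_simple_cont} (which uses only the failure of $(B2)$): that proof gives $S_{\UB_0+t,\CB_0+t} = S_0 \neq \emptyset$ for all small $|t|$. For the matching lower bound, fix small $t$ and any $\eta>0$; monotonicity of the safe set in $\UB$ gives $S_{\UB_0+t-\eta,\CB_0+t} \subseteq S_{\UB_0+t,\CB_0+t} = \{p_1,\ldots,p_k\}$. At each $p_i$, the $(B1)$-vanishing argument from the first half of the proof of Theorem \ref{thm:bifur_1}, applied now at the shifted parameters $(\UB_0+t,\CB_0+t)$, uses maximality of $S_0$ at those parameters to force $\bar{B}(f(p_i),\CB_0+t)$ to share a boundary point with its containing component of $S_0+(\UB_0+t)$; shrinking that thickening by $\eta$ pushes part of the ball outside and evicts $p_i$ from the safe set. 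Hence $S_{\UB_0+t-\eta,\CB_0+t}=\emptyset$, giving $u_{min}(\CB_0+t) \geq \UB_0+t-\eta$ for every $\eta>0$, so $u_{min}(\CB_0+t) \geq \UB_0+t$. Combining the two bounds yields $u_{min}(\CB_0+t) = \UB_0+t$ for all small $t$ of either sign, which is the desired slope-$1$ conclusion.
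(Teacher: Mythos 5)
Your upper bound is sound and matches the paper: the slope-one invariance $S_{\UB_0+t,\CB_0+t}=S_{\UB_0,\CB_0}$ for small $|t|$ of either sign (the easy inclusion plus the failure of (B2), exactly as in Lemma \ref{lemma:Hauss_simple_cont}) gives $u_{min}(\CB_0+t)\leq u_{min}(\CB_0)+t$ for both positive and negative $t$, which are the paper's inequalities (\ref{eqn:shrink_slope_1}) and (\ref{eqn:shrink_slope_2}).

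The lower bound is where you diverge, and the step you yourself flag as a ``subtlety'' is in fact false: $S_0=S_{u_{min}(\CB_0),\CB_0}$ need not be a finite set of points. Theorem \ref{thm:bifur_1} only gives you that \emph{some} component is a single point, and the paper's own example shows this is all you get: at $\CB=0.05$, $\UB\approx 0.0357$ (which is $u_{min}(0.05)$), one component shrinks to a point while the others remain non-degenerate intervals, and the \emph{entire} set nevertheless vanishes below that value because the components are coupled through the safe-set equation. Your pointwise eviction argument only removes the degenerate components, so it cannot conclude $S_{\UB_0+t-\eta,\CB_0+t}=\emptyset$ when non-degenerate components are present; and your proposed rescue via the implicit-function machinery of Section \ref{proof_bifur} cannot work either, since that machinery treats the boundary points as a single coupled system, not component by component. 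The fix is that you do not need a separate lower-bound mechanism at all: since the two-sided upper bound $u_{min}(\CB_0+t)\leq u_{min}(\CB_0)+t$ holds for $t$ of both signs (with the relevant gap constant $\Delta$ uniform on a neighborhood of $\CB_0$), applying the decreasing-direction inequality at the shifted base point $\CB_0+t$ and stepping back down to $\CB_0$ forces $u_{min}(\CB_0)\leq u_{min}(\CB_0+t)-t$, i.e. the matching lower bound. That is precisely how the paper closes the argument, and it renders your finiteness claim unnecessary as well as incorrect.
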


\begin{proof} Let for small $\delta$, $S_\delta$ denote the maximum safe set at $\UB:=u_{min}(\CB_0+\delta)$, $\CB=\CB_0+\delta$ and $\UB_\delta$ denote the set $u_{min}(\CB_0+\delta)$. Therefore, $S_\delta$ is the maximum safe set at $(\UB_\delta,\CB_0+\delta)$.

Then by assumption, $f(S_0)+\CB_0=S_0+\UB_0$. Taking closed $\delta>0$ balls around the quantities on both sides of this equation, we get
\\$[f(S_0)+\CB_0]+\delta=[S_0+\UB_0]+\delta$ $\Rightarrow$ $f(S_0)+(\CB_0+\delta)=S_0+(\UB_0+\delta)$.
\\$\Rightarrow$ $S_0$ is also a safe set for $(\UB_0+\delta,\CB_0+\delta)$. Therefore, for 
\begin{equation}\label{eqn:shrink_slope_1}
\mbox{For } \forall\delta>0, u_{\min}(\CB_0+\delta)\leq u_{min}(\CB_0)+\delta.
\end{equation}

Let $\Delta_1$ be the maximum distance between adjacent components of $S_0$ which are less than or equal to $2\UB_0$ distance apart. Then since a split bifurcation does not occur at $\UB_0,\CB_0$, $\Delta_1<2\UB_0$. Then $\Delta:=2\UB_0-\Delta_1>0$. So if $\exists y\in\mathbb{R}$ such that $d(y,S_0)>0.5\Delta_1=\UB_0-0.5\Delta$, then $y\notin S_0+\UB_0$

\textbf{Claim.} For $\forall \delta>0$ such that $\delta<0.5\Delta$, $S_0$ is a safe set for $(\UB_0-\delta,\CB_0-\delta)$.

\textbf{Proof.} Suppose the claim is false. Then $\exists x\in S_0$ such that $\bar{B}(f(x),\CB_0-\delta)$ is not a subset of $S_0+(\UB_0-\delta)$. Then $\exists y=f(x)\pm(\CB_0-\delta)\notin S_0+(\UB_0-\delta)$. This implies that $d(y,S_0)>\UB_0-\delta\geq\UB_0-0.5\Delta$. We have seen that this implies that $y\notin S_0+\UB_0$.

But $y\in f(S_0)+(\CB_0-\delta)\subset f(S_0)+\CB_0= S_0+\UB_0$, a contradiction. Hence the claim must be true.
Therefore, it follows from this claim that,
\begin{equation}\label{eqn:shrink_slope_2}
\mbox{For } \forall 0<\delta<0.5\Delta, u_{\min}(\CB_0-\delta)\leq u_{min}(\CB_0)-\delta.
\end{equation}
The constant $\Delta$ can be chosen to be constant for all points in any small neighborhood of $\CB_0$. Therefore, the two inequalities (\ref{eqn:shrink_slope_1}) and (\ref{eqn:shrink_slope_2}) together prove the claim of this theorem.
\qed
\end{proof}

\appendix
\section{A result on matrices}

\begin{lemma}
\label{lemma:mat_cyc}
Let $D = diag(d_1,\ldots,d_n)$ be a diagonal matrix and $M$ an $n\times n$ $0 - 1$ matrix with at most one $1$ in each row. Then the factors of $det(D - M)$ are of the following form :\\
(i) $d_i$ \\
(ii)$d_{i_1}\ldots d_{i_k}-1$.\\
Moreover, a factor of the form $d_{i_1}\ldots d_{i_k}-1$ occurs iff the principal matrix of $M$ with indices $i_1,\ldots,i_k$ is a permutation matrix corresponding to the cyclic permutation $(i_1,\ldots i_k)$.
\end{lemma}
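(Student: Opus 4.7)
The plan is to work combinatorially with the functional graph determined by $M$. Since each row of $M$ contains at most one $1$, $M$ encodes a partial function $\phi$ on $\{1,\ldots,n\}$ via $\phi(i)=j$ whenever $M_{ij}=1$. In the associated functional graph each vertex has out-degree at most $1$, so the vertex set decomposes as a disjoint union of cycles of $\phi$ (self-loops included as $1$-cycles) together with \emph{tree vertices} that do not lie on any cycle of $\phi$.

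I would begin with the Leibniz expansion
\begin{equation*}
\det(D-M)=\sum_{\sigma\in S_n}\mathrm{sgn}(\sigma)\prod_{i=1}^{n}(D-M)_{i,\sigma(i)}.
\end{equation*}
The entry $(D-M)_{i,\sigma(i)}$ equals $d_i-M_{ii}$ when $\sigma(i)=i$ and equals $-M_{i,\sigma(i)}$ otherwise. Hence a term is nonzero only when $\sigma(i)=\phi(i)$ on the set $F:=\{i:\sigma(i)\neq i\}$, which forces $\phi$ to permute $F$ without fixed points; equivalently, $F$ must be a disjoint union of cycles of $\phi$ of length $\geq 2$.

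Next I would collect the signs. If $F$ consists of $r$ such cycles with total size $|F|$, then $\mathrm{sgn}(\sigma)=(-1)^{|F|-r}$, while $\prod_{i\in F}(-1)=(-1)^{|F|}$, giving a net contribution $(-1)^{r}$. Every $i\in F$ lies on a cycle of length $\geq 2$, so $M_{ii}=0$ there and those diagonal factors are bare $d_i$'s. Writing $W$ for the set of vertices outside all length-$\geq 2$ cycles, $\mathcal{C}_{\geq 2}$ for the family of such cycles, and $\alpha_C:=\prod_{i\in C}d_i$, the expansion reorganizes into
\begin{equation*}
\det(D-M)=\prod_{i\in W}(d_i-M_{ii})\ \sum_{T\subseteq \mathcal{C}_{\geq 2}}(-1)^{|T|}\prod_{C\in\mathcal{C}_{\geq 2}\setminus T}\alpha_C,
\end{equation*}
and the inner sum telescopes, by the standard product-of-sums identity, into $\prod_{C\in\mathcal{C}_{\geq 2}}(\alpha_C-1)$.

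Finally, I would interpret the leftover factor $\prod_{i\in W}(d_i-M_{ii})$. A self-loop vertex ($M_{ii}=1$) contributes $d_i-1$, the $k=1$ instance of a type (ii) factor, while a tree vertex ($M_{ii}=0$) contributes $d_i$, a type (i) factor. Collecting everything yields the full factorization into factors of types (i) and (ii), with a factor $d_{i_1}\cdots d_{i_k}-1$ appearing precisely when $(i_1,\ldots,i_k)$ is a cycle of $\phi$, equivalently when the principal submatrix of $M$ on those indices is the permutation matrix of the cyclic permutation $(i_1,\ldots,i_k)$. The delicate step is the sign bookkeeping in the Leibniz expansion; the rest is routine combinatorics.
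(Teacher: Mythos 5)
Your proof is correct, and it takes a genuinely different route from the paper's. The paper argues by induction on $n$: if some column of $M$ is zero it peels off a factor $d_i$ and recurses; otherwise $M$ must be a full permutation matrix, which it block-diagonalizes by cycle decomposition and reduces to the identity $\det(D-M)=\det(D)-1$ for a single cyclic permutation matrix. You instead expand $\det(D-M)$ directly via the Leibniz formula over the functional graph of the partial map $\phi$ encoded by $M$, observe that the surviving permutations are exactly those acting as $\phi$ on a union of cycles of length $\ge 2$ and as the identity elsewhere, and resum. Your sign computation $(-1)^{|F|-r}\cdot(-1)^{|F|}=(-1)^r$ and the telescoping identity $\sum_{T}(-1)^{|T|}\prod_{C\notin T}\alpha_C=\prod_C(\alpha_C-1)$ are both right, and the identification of self-loops with the $k=1$ factors $d_i-1$ handles the case $M_{ii}=1$ cleanly. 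What your approach buys is a closed-form factorization, $\det(D-M)=\bigl(\prod_{i\ \mathrm{off\ all\ cycles}} d_i\bigr)\prod_{C\ \mathrm{cycle\ of}\ \phi}\bigl(\prod_{i\in C}d_i-1\bigr)$, which gives the ``iff'' clause of the lemma immediately and with explicit multiplicities, whereas the paper's induction only certifies the shape of the factors and leaves the base case to enumeration. The trade-off is that your argument front-loads the combinatorial bookkeeping (which permutations survive, and with what sign), while the paper's induction keeps each step elementary at the cost of a case split and a block-diagonalization appeal.
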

\begin{proof} The proof will be by induction on $n$. The base case $n = 2$ can be verified by enumerating the few possibilities for the matrix $M$.

For general $n$, there can be two cases :\\
(i) there exists a column in $M$ with all entries 0. Without loss of generality, this column is the first column. Then $det(D - M) = d_1 det(D'-M')$, where $D'$, $M'$ are obtained from $D$, $M$ respectively by deleting the first rows and columns. By the inductive assumption, $det(D'-M')$ has the prescribed format and hence so does $det(D - M)$. \\
(ii) All columns in $M$ have a non-zero entry. Since there is at most one $1$ in each row of $M$, there are at most $n$ $1$-s in $M$. Hence $M$ is a permutation matrix. Using the cycle decomposition of permutations, the rows and columns may be permuted (without changing the determinant) so that $D-M$ is in block diagonal form. Hence, its determinant is the product of the determinant of the blocks. Hence, it is sufficient to prove the theorem for the case that $M$ is a cyclic permutation matrix. If $M$ is a cyclic permutation matrix, then $det(D-M) = det(D) - 1$ Hence in either case, $det(D - M)$ has the prescribed format.\qed
\end{proof}

\section{Safe sets}

\begin{proposition}[The maximality criterion]
\label{lemma:maximal} 
The maximal safe set $S_{\UB,\CB}$ satisfies $f(S_{\UB,\CB})+\CB=[f(Q)+\CB]\cap[S_{\UB,\CB}+\UB]$.
\end{proposition}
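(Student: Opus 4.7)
The equality has two inclusions; I will handle them separately.

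\textbf{Easy inclusion} $f(S_{\UB,\CB})+\CB \subseteq [f(Q)+\CB]\cap[S_{\UB,\CB}+\UB]$. Since $S_{\UB,\CB}\subseteq Q$, monotonicity of the $\CB$-thickening gives $f(S_{\UB,\CB})+\CB \subseteq f(Q)+\CB$. The defining inclusion of a safe set, Eqn.~\ref{eqn:safe_set_1}, gives $f(S_{\UB,\CB})+\CB \subseteq S_{\UB,\CB}+\UB$. Intersecting these yields the claim.

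\textbf{Hard inclusion} $[f(Q)+\CB]\cap[S_{\UB,\CB}+\UB] \subseteq f(S_{\UB,\CB})+\CB$. Here I use maximality. The key tool is the fixed-point characterization
$$
S_{\UB,\CB} \;=\; \{x\in Q : \bar B(f(x),\CB)\subseteq S_{\UB,\CB}+\UB\},
$$
which is a restatement of maximality: any $x\in Q$ whose $\CB$-ball image lies entirely inside $S_{\UB,\CB}+\UB$ can be adjoined to $S_{\UB,\CB}$ without violating Eqn.~\ref{eqn:safe_set_1} (continuity of $f$ and compactness of $S_{\UB,\CB}\cup\{x\}$ are used here), so maximality forces $x\in S_{\UB,\CB}$. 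The proof of the Bifurcation Theorem already uses this characterization implicitly.

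Take $y\in[f(Q)+\CB]\cap[S_{\UB,\CB}+\UB]$, and suppose for contradiction that $y\notin f(S_{\UB,\CB})+\CB$, i.e. $|y-f(s)|>\CB$ for every $s\in S_{\UB,\CB}$. Pick $q\in Q$ with $|y-f(q)|\leq\CB$; the contradiction hypothesis forces $q\notin S_{\UB,\CB}$. By the fixed-point characterization there exists $\eta\in[-\CB,\CB]$ with $f(q)+\eta\notin S_{\UB,\CB}+\UB$. So the interval $\bar B(f(q),\CB)$ contains both $y\in S_{\UB,\CB}+\UB$ and $f(q)+\eta\notin S_{\UB,\CB}+\UB$; it straddles the boundary of $S_{\UB,\CB}+\UB$.

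Using the fact that $S_{\UB,\CB}+\UB$ is a finite union of closed intervals in $\mathbb{R}$ (since $S_{\UB,\CB}$ is compact), the boundary-crossing produces a point of $\partial(S_{\UB,\CB}+\UB)$ inside $\bar B(f(q),\CB)$. Every such boundary point has the form $s_0\pm\UB$ for some $s_0\in\partial S_{\UB,\CB}$, and the boundary analysis from the Bifurcation Theorem proof shows that for this $s_0$, the ball $\bar B(f(s_0),\CB)$ in turn touches $\partial(S_{\UB,\CB}+\UB)$, producing an explicit $s\in S_{\UB,\CB}$ with $f(s)$ within $\CB$ of $y$. This contradicts $y\notin f(S_{\UB,\CB})+\CB$.

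\textbf{Main obstacle.} The hard inclusion is the substance of the lemma; the naive idea of simply adjoining $q$ to $S_{\UB,\CB}$ fails because $S_{\UB,\CB}\cup\{q\}$ need not be safe (we only know one point, $y$, of $\bar B(f(q),\CB)$ lies in $S_{\UB,\CB}+\UB$). The delicate step is the last one: converting the geometric fact that $\bar B(f(q),\CB)$ straddles $\partial(S_{\UB,\CB}+\UB)$ into the existence of an actual $s\in S_{\UB,\CB}$ with $|y-f(s)|\leq\CB$, which requires the interval structure of $S_{\UB,\CB}+\UB$ together with continuity of $f$.
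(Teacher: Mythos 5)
Your easy inclusion is correct and identical to the paper's. Your fixed-point characterization of maximality, $S=\{x\in Q:\bar{B}(f(x),\CB)\subseteq S+\UB\}$ (writing $S$ for $S_{\UB,\CB}$), is also correct, and the first two-thirds of the hard inclusion is sound: given $y\in[f(Q)+\CB]\cap[S+\UB]$ with $y\notin f(S)+\CB$, you correctly produce $q\in Q\setminus S$ with $y\in\bar{B}(f(q),\CB)$ and an $\eta$ with $f(q)+\eta\notin S+\UB$, so that $\bar{B}(f(q),\CB)$ straddles $\partial(S+\UB)$ at some point $w=s_0\pm\UB$ with $s_0\in\partial S$. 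The gap is the final step. From the fact that $\bar{B}(f(s_0),\CB)$ in turn touches $\partial(S+\UB)$ (which maximality does force unless $s_0\in\partial Q$) you cannot conclude that there exists $s\in S$ with $|y-f(s)|\leq\CB$: that fact constrains the position of $f(s_0)$ relative to the set $S+\UB$, but says nothing about the position of $f(s_0)$, or of any other image point, relative to $y$. The point $y$ lies near the endpoint $s_0\pm\UB$ of a component of $S+\UB$, while $f(s_0)$ may lie in an entirely different component; the two are a priori unrelated. So the contradiction in your last paragraph is asserted rather than derived, and I do not see how to complete it from the straddling fact alone.

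What is missing is an enlargement argument, which is how the paper closes the loop: it locates a boundary point $z$ of $f(S)+\CB$ lying in the interior of the right-hand side, writes $z=f(x)\pm\CB$ for a boundary point $x$ of $S$ in the interior of $Q$, and contradicts maximality by showing that a small interval adjacent to $x$ can be adjoined to $S$ while preserving safeness (for $x'$ near $x$ on the appropriate side, $\bar{B}(f(x'),\CB)$ still lies inside $S+\UB$, because the ball shifts toward a region --- the ``hole'' --- that is itself contained in $S+\UB$). Your setup could be repaired along the same lines: the maximal subinterval of $[f(Q)+\CB]\cap[S+\UB]$ containing $y$ and disjoint from $f(S)+\CB$ has an endpoint on $\partial(f(S)+\CB)$, whose preimage in $\partial S$ is the point to perturb. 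But some version of this enlargement step must actually be carried out; as written, the hard inclusion is not proved.
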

\begin{proof} For the rest of the proof, let $S$ denote the maximum safe set $S_{\UB,\CB}$. Note that since $S \subseteq Q$, $f(S)+\CB \subseteq [f(Q)+\CB]$. By the definition of a safe set, $f(S)+\CB\subseteq [S+\UB]$. Therefore, $f(S)+\CB$ must be a subset of $[f(Q)+\CB]\cap[S+\UB]$.


Suppose equality does not hold. Then in particular, $f(S)+\CB$ must be a strict subset of $[f(Q)+\CB]$. Since these are both compact sets, there must be a boundary $z$ of $f(S)+\CB$ and in the interior of $f(Q)+\CB$. So there is a point $y$ on the boundary of $f(S)$ at distance $\CB$ from $z$. Note that $y$ must lie in the interior of $f(Q)$, for if $y$ was on the boundary of $f(Q)$, then $z$ would have been on the boundary of $f(Q)+\CB$. 

Now, $y=f(x)$ for some $x\in S$. We will prove that $x$ must be a boundary point of $S$. Take any open neighborhood $U$ of $y$ in $f(Q)$. Since $f$ is continuous, $f^{-1}(U)$ must be an open neighborhood of $x$. So if $x$ was an interior point of $S$, then by choosing $U$ small enough, $f^{-1}(U)$ could be contained inside $S$. This leads to a contradiction, because $U=f(f^{-1}(U))$ and $U$ is not a subset of the image $f(S)$.

Thus $x$ is a boundary point of $S$ and in the interior of $Q$. Since $f$ has been assumed to be piecewise expanding, (4b) if $x$ is perturbed slightly so as to increase $S$, the image $y=f(x)$ would also get perturbed slightly. Hence, $z$, the corresponding point on the boundary of $f(S)+\CB$ would also get perturbed slightly and still lie in the interior of $f(Q)+\CB$. This contradicts the maximality of $S$.\qed
\end{proof}

\begin{lemma} 
\label{lem:continuity_intersection}
Let for $\delta>0$, $K_{\delta}$ be a decreasing sequence of compact sets satisfying $K_{\delta}\subseteq K_{\delta'}$ if $\delta<\delta'$. Let $K=\underset{\delta>0}{\cap}K_{\delta} \neq\Phi$. Then for all $\epsilon>0$, there exists $\delta>0$ such that $d_{Hauss}(K,K_{\delta})<\epsilon$.
\end{lemma}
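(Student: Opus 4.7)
The plan is a standard nested-compact-sets argument. Recall that the Hausdorff distance decomposes as
$$d_{Hauss}(K,K_\delta)=\max\Bigl(\sup_{x\in K}d(x,K_\delta),\;\sup_{y\in K_\delta}d(y,K)\Bigr).$$
Since $K\subseteq K_\delta$ for every $\delta>0$, the first term vanishes identically. So the entire statement reduces to showing that, given $\epsilon>0$, there exists $\delta>0$ with $\sup_{y\in K_\delta}d(y,K)<\epsilon$.

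I would prove this by contradiction. Suppose that for some $\epsilon>0$ the bound fails for every $\delta>0$. Then picking a sequence $\delta_n\downarrow 0$, I can select points $y_n\in K_{\delta_n}$ with $d(y_n,K)\geq\epsilon$. All these points lie in the single compact set $K_{\delta_1}$ (since the family is decreasing), so by sequential compactness a subsequence $y_{n_k}$ converges to some $y^*\in K_{\delta_1}$.

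The key step is to show $y^*\in K$. Fix any $\delta>0$. Eventually $\delta_{n_k}<\delta$, hence $y_{n_k}\in K_{\delta_{n_k}}\subseteq K_\delta$; since $K_\delta$ is closed, the limit $y^*$ lies in $K_\delta$. As $\delta>0$ was arbitrary, $y^*\in\bigcap_{\delta>0}K_\delta=K$. But then $d(y^*,K)=0$, contradicting the continuity of the distance function $d(\cdot,K)$ together with $d(y_{n_k},K)\geq\epsilon$ for every $k$.

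I expect no real obstacles; the only subtle point is making sure to extract the convergent subsequence from the ambient compact set $K_{\delta_1}$ (rather than any single $K_{\delta_n}$) so that sequential compactness is available, and then using the closedness of each $K_\delta$ to push the limit into the intersection. The nonemptiness hypothesis $K\neq\varnothing$ is not actually used in the argument sketched above, but it is needed to make the quantity $d(y_n,K)$ well-defined in the statement.
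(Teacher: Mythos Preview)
Your proof is correct and follows essentially the same contradiction argument as the paper: pick points $y_\delta\in K_\delta$ with $d(y_\delta,K)\geq\epsilon$, extract a limit point, observe that it lies in every $K_\delta$ and hence in $K$, contradicting the distance bound. Your version is simply more explicit about where compactness is invoked (working inside $K_{\delta_1}$) and about why the limit lies in each $K_\delta$; the paper's proof is terser but identical in substance.
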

\begin{proof}
If the contrary is true, then for all $\delta>0$, there exists $x_\delta\in K_\delta$ so that $d(K,x_{\delta})\geq\epsilon$. These $x_\delta$-s have a limit point $\bar{x}$, which satisfies $d(\bar{x},K)\geq\epsilon$. Since the $K_\delta$-s form a decreasing sequence, $\bar{x}\in K_{\delta}$ for every $\delta>0$. Therefore, $\bar{x}\in\underset{\delta>0}{\cap}K_{\delta}=K$, which contradicts the fact that $d(\bar{x},K)\geq\epsilon$.\qed
\end{proof}

\begin{lemma} 
\label{lem:u_lemma}
Let for $\delta>0$, $K_{\delta}$ be a decreasing sequence of compact sets satisfying $K_{\delta}\subseteq K_{\delta'}$ if $\delta<\delta'$. Let $K=\underset{\delta>0}{\cap}K_{\delta} \neq\Phi$. Then $\underset{\delta>0}{\cap}[K_{\delta}+(u+\delta)]=K+u$.
\end{lemma}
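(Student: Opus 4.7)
The plan is to establish both inclusions in $\underset{\delta>0}{\cap}[K_\delta + (u+\delta)] = K + u$ separately, with the forward inclusion being immediate from monotonicity and the reverse requiring a compactness extraction.

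For the forward inclusion $K + u \subseteq \underset{\delta>0}{\cap}[K_\delta + (u+\delta)]$, I would simply observe that $K = \underset{\delta'>0}{\cap} K_{\delta'} \subseteq K_\delta$ for every fixed $\delta>0$. Hence any $x \in K+u$ is within distance $u \leq u+\delta$ of some point of $K \subseteq K_\delta$, so $x \in K_\delta + (u+\delta)$. Intersecting over $\delta$ gives the inclusion.

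For the reverse inclusion, I would take an arbitrary $x$ in the intersection and, for each integer $n \geq 1$, pick $y_n \in K_{1/n}$ with $d(x, y_n) \leq u + 1/n$. The crucial observation is that all these $y_n$ lie in the single compact set $K_1$ (since $K_{1/n} \subseteq K_1$ by the monotonicity hypothesis), so a subsequence $y_{n_k}$ converges to some $y \in K_1$. To show $y \in K$, I would fix an arbitrary $\delta' > 0$ and note that once $1/n_k < \delta'$ the entire tail of the subsequence lies in the closed set $K_{\delta'}$, forcing $y \in K_{\delta'}$; intersecting over $\delta'$ gives $y \in K$. Passing to the limit in $d(x, y_{n_k}) \leq u + 1/n_k$ yields $d(x, y) \leq u$, i.e.\ $x \in K+u$.

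An equivalent and slightly slicker route would be to invoke Lemma~\ref{lem:continuity_intersection}: for any $\eta > 0$ one chooses $\delta_0$ with both $\delta_0 < \eta/2$ and $d_{Hauss}(K, K_{\delta_0}) < \eta/2$, then combines $d(x, y_0) \leq u+\delta_0$ for some $y_0 \in K_{\delta_0}$ with $d(y_0, K) < \eta/2$ to conclude $d(x, K) < u + \eta$; letting $\eta \to 0$ and using compactness of $K$ (so that $d(x,K)$ is attained) gives $x \in K+u$. The only nontrivial step in either approach is guaranteeing that the limit point actually sits in $K$, which is handled in the first approach by the closedness of each $K_{\delta'}$ together with the nested-intersection definition of $K$, and in the second directly by Lemma~\ref{lem:continuity_intersection}; no other subtleties should arise. \qed
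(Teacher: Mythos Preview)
Your proposal is correct. The second route you sketch---invoking Lemma~\ref{lem:continuity_intersection} to bound $d_{Hauss}(K,K_{\delta_0})$ and then deriving $d(x,K)<u+\eta$ for every $\eta>0$---is precisely the paper's argument, which is run as a contradiction: assume $d(K,y)=u+\epsilon$, use Lemma~\ref{lem:continuity_intersection} to get $d_{Hauss}(K_\delta+(u+\delta),K+u)<\epsilon$ for small $\delta$, and contradict $y\in K_\delta+(u+\delta)$.

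Your first route differs in that it bypasses Lemma~\ref{lem:continuity_intersection} entirely and extracts the limit point directly via sequential compactness in $K_1$. This is slightly more self-contained (no auxiliary Hausdorff-distance lemma is needed), and in fact the paper's Lemma~\ref{lem:continuity_intersection} is itself proved by exactly this kind of limit-point extraction, so your first approach essentially inlines that lemma. The trade-off is that the paper's version isolates the ``$K_\delta\to K$ in Hausdorff distance'' statement as a reusable fact, whereas your direct argument is tailored to the specific conclusion needed here.
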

\begin{proof} The intersection $\underset{\delta>0}{\cap}[K_{\delta}+(u+\delta)]$ contains the set $K+u$. Let $y\in \underset{\delta>0}{\cap}[K_{\delta}+(u+\delta)]-[K+u]$. Then $d(K,y)=u+\epsilon$ for some $\epsilon>0$. By Lemma \ref{lem:continuity_intersection}, for $\delta$ sufficiently small, $d_{Hauss}(K_{\delta},K)<0.5\epsilon$. So for $\delta<0.5\epsilon$, $d_{Hauss}(K_{\delta}+(u+\delta),K+u)<\epsilon$. But we have assumed that $y\in\underset{\delta>0}{\cap}[K_{\delta}+(u+\delta)]$, so $d(K+u,y)<\epsilon$ and $d(K,y)<u+\epsilon$, a contradiction.\qed
\end{proof}

\bibliographystyle{unsrt}
\bibliography{Bibliography-Partial_control}

\begin{thebibliography}{10}

\bibitem{PartialControl_1}
J~Sabuco, M~Sanjuan, and J~Yorke.
\newblock Dynamics of partial control.
\newblock {\em Chaos}, 22, 2012.

\bibitem{PartialControl_2}
J~Sabuco, S~Zambrano, and J~Yorke M~Sanjuan.
\newblock Finding safety in partially controllable chaotic systems.
\newblock {\em Communications in Nonlinear Science and Numerical Simulation},
  17 (11), 2012.

\bibitem{PredPrey}
R~Capeans, J~Sabuco, and M~Sanjuan.
\newblock When less is more: Partial control to avoid extinction of predators
  in an ecological model.
\newblock {\em Ecological Complexity}, 19:1--8, 2014.

\bibitem{Controlling_chaos}
E~Ott, C~Grebogi, and J~Yorke.
\newblock Controlling chaos.
\newblock {\em Physical Review Letters}, 64:1196, 12 March 1990.

\bibitem{ControlFreq}
S~Zambrano, J~Sabuco, and M~Sanjuan.
\newblock How to minimize the control frequency to sustain transient chaos
  using partial control.
\newblock {\em Communications in Nonlinear Science and Numerical Simulation},
  19:726--737, 2014.

\bibitem{Reachability1}
D~P Bertsekas.
\newblock Infinite-time reachability of state-space regions by using feedback
  control.
\newblock {\em IEEE Transactions on Automatic Control}, 17 (5):604--613, 1972.

\bibitem{Reachability2}
D~P Bertsekas and I~B Rhodes.
\newblock On the minmax reachability of target set and target tubes.
\newblock {\em Automatica}, 7 (a):233--247, 1971.

\bibitem{Reachability3}
D~P Bertsekas and I~B Rhodes.
\newblock Recursive state estimation for a set-membership description of
  uncertainty.
\newblock {\em IEEE Transactions on Automatic Control}, 16 (b):117--128, 1971.

\bibitem{GutCwi1}
P~O Gutman and M~Cwikel.
\newblock Admissible sets and feedback control for discrete-time linear systems
  with bounded control and states.
\newblock {\em IEEE Transactions on Automatic Control}, 31 (4):373--376, 1986.

\bibitem{GutCwi2}
P~O Gutman and M~Cwikel.
\newblock Convergence of an algorithm to find maximal state constraint sets for
  discrete-time linear dynamical systems with bounded control and states.
\newblock {\em IEEE Transactions on Automatic Control}, 31 (5):457--459, 1986.

\bibitem{GutCwi3}
P~O Gutman and M~Cwikel.
\newblock An algorithm to find maximal state constraint sets for discrete-time
  linear dynamical systems with bounded control and states.
\newblock {\em IEEE Transactions on Automatic Control}, 32 (3):251--254, 1987.

\bibitem{EventCtrl}
K~J Aström.
\newblock {\em Event based control}.
\newblock Springer Berlin Heidelberg, 2008.

\bibitem{MinAttent}
M~C~F Donkers, P~Tabuada, and W~P M~H Heemels.
\newblock Minimum attention control for linear systems.
\newblock {\em Discrete Event Dynamic Systems}, 24 (2):199--218, November,
  2012.

\bibitem{ContTimeInvThm}
M~Nagumo.
\newblock Über die lage der integralkurven gewöhnlicher
  differentialgleichungen.
\newblock {\em Proc. Phys.-Math. Soc. Japan}, 24 (3):551--559, 1942.

\bibitem{SetInvSurvey}
F~Blanchini.
\newblock Set invariance in control.
\newblock {\em Automatica (Journal of IFAC)}, 35 (11):1747--1767, November,
  1999.

\bibitem{AsympStab}
R~Genesio, M~Tartaglia, and A~Vicino.
\newblock On the estimate of asymptotic stability regions: State of art and new
  proposal.
\newblock {\em IEEE Transactions on Automatic Control}, 30 (8):747--755, 1985.

\bibitem{InvSetBio}
V~Hutson and K~Schmitt.
\newblock Permanence and the dynamics of biological systems.
\newblock {\em Mathematical Biosciences}, 111:1--71, 1992.

\bibitem{Kolmanovski}
I~V Kolmanovski and E~G Gilbert.
\newblock {\em Multimode regulators for systems with state and control
  constraints and disturbance inputs}.
\newblock Springer, 1997.

\end{thebibliography}
\end{document}